\documentclass[11pt]{article}

\usepackage[table, dvipsnames]{xcolor}
\usepackage[pagebackref,backref=true,colorlinks=true, linkcolor=Sepia, citecolor=Sepia]{hyperref}
\usepackage{amsmath,amsthm,amssymb,fancyhdr,graphicx,bbm,cancel,mathrsfs,todonotes,xypic,pinlabel}
\usepackage{tikz}
\usepackage{extarrows,tipa}
\usetikzlibrary{matrix}
\usepackage[all]{xy}
\usepackage{mathtools}
\usepackage{bm}
\usepackage{verbatim}
\usepackage{microtype}
\usepackage{subcaption}

\usepackage{quiver}

\theoremstyle{theorem}%The commands below will have bold title, italic text
\newtheorem{thm}{Theorem}[section]

\newtheorem{lem}[thm]{Lemma}
\newtheorem{prop}[thm]{Proposition}

\newtheorem{cor}[thm]{Corollary}
\theoremstyle{definition} %The commands below have bold title, standard text

\newtheorem{qu}[thm]{Question}

\newtheorem{rmk}[thm]{Remark}
\theoremstyle{remark} %The commands below have italic title, standard text
\usepackage{enumerate,pdfpages,stmaryrd}
\usepackage[margin=1.4in, marginparwidth=1in]{geometry}
\usepackage{tikz}
\usepackage{dsfont}
\usetikzlibrary{matrix}

\newcommand{\Z}{\mathbb{Z}}

\newcommand{\R}{\mathbb{R}}

\newcommand{\beq}{\begin{equation*}}
\newcommand{\eeq}{\end{equation*}}

\newcommand{\GL}{\mathrm{GL}}

\DeclareMathOperator{\Diff}{Diff}

\DeclareMathOperator{\Aut}{Aut}

\DeclareMathOperator{\id}{id}

\DeclareMathOperator{\Isom}{Isom}

\DeclareMathOperator{\SO}{SO}

\DeclareMathOperator{\Out}{Out}
\DeclareMathOperator{\Int}{int}

\DeclareMathOperator{\Flat}{Flat}

\newcommand{\dmo}{\DeclareMathOperator}

\newcommand{\Om}{\Omega}

\newcommand{\wtil}{\widetilde}

\newcommand{\bb}[1]{\mathbb{#1}}\newcommand{\mf}{\mathfrak}

\dmo{\sgn}{sign}\dmo{\Span}{span}
\dmo{\we}{\wedge}
\dmo{\ind}{ind}\dmo{\Ind}{Ind}
\dmo{\bop}{\bigoplus}\dmo{\pic}{Pic}
\dmo{\vol}{Vol}\dmo{\gal}{Gal}\dmo{\perm}{Perm}
\dmo{\tor}{Tor}\dmo{\ext}{Ext}\dmo{\Ext}{Ext}
\dmo{\aut}{aut}

\dmo{\inn}{Inn}\dmo{\var}{Var}
\dmo{\ad}{ad}\dmo{\curl}{curl}
\dmo{\hy}{\bb H}\dmo{\Sl}{SL}
\dmo{\psl}{PSL}
\dmo{\iso}{iso}
\dmo{\conf}{Conf}
\dmo{\stab}{Stab}\dmo{\Jac}{Jac }
\dmo{\diam}{diam}\dmo{\fix}{Fixed}\dmo{\Fix}{Fix}
\dmo{\injR}{injRad}\dmo{\Ad}{Ad}
\dmo{\esv}{ess-vol}
\dmo{\nil}{Nil}\dmo{\sol}{Sol}
\dmo{\Div}{div}
\dmo{\SU}{SU}
\dmo{\rk}{rk}
\dmo{\rank}{rank}
\dmo{\psp}{PSp}\dmo{\psu}{PSU}
\dmo{\PU}{PU}\dmo{\pgl}{PGL}
\dmo{\Mod}{Mod}\dmo{\range}{Range}
\dmo{\eu}{eu}\dmo{\mi}{mi}
\dmo{\Log}{Log}\dmo{\supp}{supp}
\dmo{\maps}{Maps}\dmo{\Gr}{Gr}
\dmo{\Pin}{Pin}
\dmo{\Spin}{Spin}\dmo{\Str}{Str}
\dmo{\Sq}{Sq}\dmo{\Symp}{Symp}
\dmo{\pd}{PD}\dmo{\PD}{PD}\dmo{\sig}{Sig}
\dmo{\St}{St}
\dmo{\Pt}{Pt}\dmo{\pt}{pt}

\dmo{\Pl}{PL}
\dmo{\String}{String}\dmo{\smear}{smear}

\dmo{\dev}{dev}
\dmo{\met}{Met}\dmo{\contact}{Contact}
\dmo{\teich}{Teich}\dmo{\Teich}{Teich}\dmo{\qi}{QI}
\dmo{\der}{Der}
\dmo{\cl}{Cliff}\dmo{\Cl}{Cl}
\dmo{\Pf}{Pf}
\dmo{\ch}{ch}\dmo{\diag}{diag}
\dmo{\grad}{grad}\dmo{\Char}{char}
\dmo{\spec}{Spec}\dmo{\Arg}{Arg}
\dmo{\gl}{GL}
\dmo{\sym}{Sym}\dmo{\Sym}{Sym}
\dmo{\com}{Comm}
\dmo{\Lk}{Lk}
\dmo{\CAT}{CAT}
\dmo{\Rep}{Rep}
\dmo{\Res}{Res}
\dmo{\Conf}{Conf}
\dmo{\PConf}{PConf}
\dmo{\Push}{Push}
\dmo{\Cont}{Cont}
\dmo{\sm}{\setminus}
\dmo{\vn}{\varnothing}
\dmo{\disk}{\mathbb D}
\dmo{\Trd}{Trd}\dmo{\Mat}{Mat}
\dmo{\Riem}{Riem}
\dmo{\Diffn}{\Diff_0}\dmo{\diff}{diff}
\dmo{\homeo}{Homeo}

\dmo{\Ham}{Ham}\dmo{\Met}{Met}
\dmo{\Ein}{Ein}\dmo{\CP}{\co P}
\dmo{\Per}{Per}\dmo{\Ric}{Ric}
\dmo{\Nrd}{Nrd}
\dmo{\Comp}{Comp}\dmo{\PSC}{PSC}
\dmo{\Cent}{Cent}\dmo{\Orb}{Orb}
\dmo{\aind}{a-ind}\dmo{\tind}{t-ind}
\dmo{\constant}{constant}
\dmo{\Td}{Td}
\dmo{\LMod}{LMod}
\dmo{\SMod}{SMod}
\dmo{\SDiff}{SDiff}
\dmo{\Br}{Br}
\dmo{\csch}{csch}
\dmo{\triv}{triv}
\dmo{\genus}{genus}
\dmo{\Homeq}{HomEq}
\dmo{\PP}{\mathbb{P}}
\dmo{\U}{U}
\dmo{\Gal}{Gal}
\dmo{\BDiff}{\wtil{\Diff}}
\dmo{\BAut}{\wtil{\Aut}}
\dmo{\Iso}{Iso}
\dmo{\Cone}{Cone}
\dmo{\codim}{codim}
\dmo{\II}{II}
\dmo{\I}{I}
\dmo{\InjRad}{InjRad}
\dmo{\Inn}{Inn}
\dmo{\sys}{sys}
\dmo{\Comm}{Comm}
\dmo{\PO}{PO}
\dmo{\vertex}{Vert}
\dmo{\POm}{P\Om}
\dmo{\ab}{ab}
\dmo{\PSO}{PSO}
\dmo{\CRS}{CRS}
\dmo{\Diffext}{Diffext}
\dmo{\Diffextad}{Diffextad}
\dmo{\Diffstand}{Diffstand}

\newcommand{\EE}{\mathbb{E}}

\newcommand{\HH}{\mathbb{H}}

\newcommand{\RR}{\mathbb{R}}

\newcommand{\ZZ}{\mathbb{Z}}

\newcommand{\cM}{\mathcal{M}}

\newcommand{\cO}{\mathcal{O}}

\newcommand{\cT}{\mathcal{T}}

\newcommand{\cX}{\mathcal{X}}
\newcommand{\cY}{\mathcal{Y}}
\newcommand{\cZ}{\mathcal{Z}}

\newcommand{\kb}{\mathfrak{b}}

\newcommand{\Id}{\operatorname{Id}}

\renewcommand{\II}{\operatorname{I\!I}}

\usepackage[titles]{tocloft}

\begin{document}

\title{Finite group actions on nonpositively curved 3-manifolds}
\author{Bena Tshishiku and Zhengyu Zou}

\maketitle

%\vspace*{-4ex}

\begin{abstract}
We show that every orientation-preserving action of a finite group on a closed nonpositively curved 3-manifold preserves a nonpositively curved metric.
\end{abstract}

\setcounter{secnumdepth}{3}
\setcounter{tocdepth}{1}
\begingroup
\linespread{0.8}\selectfont % Forces lines inside the TOC to pack tightly together
\tableofcontents
\addtocontents{toc}{\protect\thispagestyle{empty}}
\endgroup

\section{Introduction}

This paper concerns finite group actions on closed, orientable 3-manifolds $M$ with metrics of nonpositive sectional curvature. We are motivated by the following question of Schoen--Yau \cite[pg.\ 378]{schoen-yau}.

\begin{qu}[Schoen--Yau]\label{q:main}
Let $M$ be a closed manifold with nonpositive curvature, and let $G$ be a finite subgroup of outer automorphisms of $\pi_1(M)$. Is there a nonpositively curved metric $\mu$ on $M$ such that $G$ is induced by a finite group of isometries of $(M,\mu)$?
\end{qu}

For surfaces, Question \ref{q:main} was asked earlier by Nielsen \cite[\S12]{nielsen} and was answered affirmatively by Kerckhoff \cite{kerckhoff}. In high dimensions, Farrell--Jones \cite{FJ-nielsen} provided a negative answer to Question \ref{q:main} by finding negatively curved $M$ and finite order outer automorphisms that are not induced by \emph{any} diffeomorphisms of $M$ (finite order or not!).

In dimension 3, any finite $G<\Out(\pi_1M)$ is induced by a finite group of \emph{diffeomorphisms} of $M$ by work of Zimmermann and Zieschang--Zimmermann \cite{zimmermann,zimmermann-zieschang, zimmermann-extensions}, so Question \ref{q:main} reduces to the question of whether any finite group action by diffeomorphisms on a nonpositively curved $M$ preserves some nonpositively curved metric. We answer this affirmatively for orientation-preserving actions.

\begin{thm}[Invariant nonpositively curved metrics]\label{thm:main}
Let $M$ be a closed, orientable $3$-manifold, and let $G$ be a finite group of orientation-preserving diffeomorphisms of $M$. If $M$ has a nonpositively curved metric, then there exists a nonpositively curved metric that's preserved by $G$.
\end{thm}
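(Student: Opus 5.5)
The plan is to reduce to the geometric decomposition and build the invariant metric piece by piece. A closed orientable nonpositively curved $M$ is aspherical and irreducible, so by geometrization it is either geometric or has a nontrivial JSJ decomposition. For nonpositively curved $M$ the geometric case means $M$ is hyperbolic, Euclidean, or modeled on $\HH^2\times\RR$. In these cases we combine three tools.
\begin{itemize}
\item By the orbifold theorem (geometrization of the quotient orbifold $M/G$), the $G$-action is conjugate to an isometric action for the model geometry.
\item For hyperbolic $M$, Mostow rigidity also shows the action is isometric for the unique hyperbolic metric.
\item In the Euclidean and $\HH^2\times\RR$ cases the locally homogeneous metric is already nonpositively curved.
\end{itemize}
If $M$ is a graph manifold or mixed, we use the equivariant JSJ theorem (Meeks--Scott, or again the orbifold theorem applied to $M/G$). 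This lets us isotope the action so that the union $\mathcal T$ of JSJ tori is $G$-invariant. Each piece $M_v$ is then either hyperbolic with cusps or Seifert fibered, and the stabilizer $G_v$ acts on it. Moreover, $G_v$ can be taken to act isometrically for a finite-volume hyperbolic metric, or fiber-preservingly and isometrically for an $\HH^2\times\RR$ metric on the Seifert piece with base orbifold of negative Euler characteristic.

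The core step is to produce, on each piece, a $G_v$-invariant nonpositively curved metric with a prescribed boundary structure. Near each boundary torus this metric should be a flat product $T^2\times[0,1]$, with the flat structure on $T^2$ chosen so that gluings match across the tori. This follows the Leeb strategy: Leeb proved that $M$ admits a nonpositively curved metric iff certain boundary-matching conditions hold, and we make his construction equivariant. For hyperbolic pieces, truncate the cusps along $G_v$-invariant horotori (choosing the cusp cutoffs equivariantly, averaging over orbits of cusps). Then deform to a flat collar using the Leeb/Kapovich--Leeb interpolation, which can be done so that it is invariant under the finite group of isometries of the cusp. For Seifert pieces, use metrics of the form $\HH^2$-orbifold metric times a circle, with boundary horocyclic flat collars. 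The moduli to be chosen here are the base metric, the fiber length, and a twist/shear, which can be chosen to be $G_v$-invariant.

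The gluing step then requires the flat metrics on the two sides of each torus $T\subset\mathcal T$ to agree via the gluing map, equivariantly for $\stab_G(T)$. We organize this by choosing data on one representative of each $G$-orbit of pieces and tori, then transporting it by $G$. This makes the metric automatically $G$-invariant wherever it is defined consistently. The only constraint is that each stabilizer must preserve the chosen data, which is a fixed-point problem for a finite group acting on a contractible (convex) parameter space, such as the space of admissible flat metrics or the choices of fiber length and shear. A finite group acting on a convex or CAT(0) parameter space has a fixed point, so invariant choices exist.

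The main obstacle will be the graph manifold case. There Leeb's sufficient condition is a solvability condition on the fiber lengths and shears across the JSJ graph, coming from the Buyalo--Svetlov and Leeb criterion. We must show that an equivariant solution exists whenever some solution exists. The first approach to try is to observe that the solution set, for the relevant parameters such as the boundary slopes and the ratios and signs determined by the gluing matrices, is a convex cone. The $G$-action permutes these parameters linearly, but orientation-preservation is needed so that signs (Euler number and twist data) are respected and the cone is preserved. Averaging a solution over $G$ then gives a $G$-invariant solution. Verifying that the action on the parameter space is affine and preserves the cone, and that the averaged data can be realized by actual $G_v$-invariant metrics on the pieces, is the technical heart of the argument.
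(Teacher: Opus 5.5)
Your architecture matches the idea behind the paper's graph-manifold argument: make the JSJ tori $G$-invariant, note that Leeb's extendability conditions cut out a convex set, and average a known solution over the group. But the proposal stops exactly at the steps that make this work, and neither of them is routine.

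The first gap is what ``averaging a solution'' means. The space of flat metrics on $T^2$ is not convex, since a convex combination of two flat metrics is generally not flat, so the boundary metrics cannot be averaged directly. If you average the inner products $B_T$ on $H_1(T;\R)$ instead, you still have to produce, for each torus, an actual $\stab_G(T)$-invariant flat metric realizing the averaged $\bar B_T$, chosen $G$-equivariantly. The missing ingredient is Lemma~\ref{lem:torus-pillow}: for flat $\mu_1,\mu_2$ on a torus or pillow there is a unique flat metric affinely equivalent to $\mu_1$ and isotopic to $\mu_2$. Replace the extendable boundary metric by the isotopic one that is affinely equivalent to a fixed invariant flat metric. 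Then all group translates lie in a single affine class. There convex combinations stay flat and $\mu\mapsto B_\mu$ is linear, so Leeb's linear conditions survive the averaging (Remark~\ref{rmk:average}, Proposition~\ref{prop:average-JSJ}).

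The second, more substantial gap is equivariant extension over the Seifert pieces. Once the boundary data is invariant, you need nonpositively curved metrics on the pieces that are invariant under the stabilizers $G_v$. Such metrics cannot be averaged, so your fixed-point principle does not apply to them. Your assertion that the base metric, fiber length and twist ``can be chosen to be $G_v$-invariant'' is exactly the claim that the descended flat metric on $\partial(M_v/G_v)$ extends geometrically over the orbifold $M_v/G_v$. This is the content of Section~\ref{sec:extend}. There the twist is a homomorphism $\rho_\R$ from the orbifold fundamental group to $\Isom(\R)$, and it has to be constructed:
\begin{itemize}
\item by transfer when the quotient piece is totally orientable (Proposition~\ref{prop:cover-totally-orientable});
\item by the reflection-twisted formula $\rho_\R=\tfrac12[\rho_\R^+-\rho_\R^+\circ\tau]$ when it is not (Proposition~\ref{prop:totally-orientable-cover});
\item in both cases with the base realized with prescribed boundary lengths on orbifolds that may carry cone and reflector points (Lemma~\ref{lem:boundary-lengths}).
\end{itemize}
Relatedly, the homological form of Leeb's conditions is only available for totally orientable pieces, and pieces of $M$ can have nonorientable base. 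This is why the paper averages on the regular manifold cover $N\to M/G$, all of whose pieces are totally orientable, rather than on $M$ itself.

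Three smaller points. Mostow rigidity alone only makes the action homotopic to an isometric one. Your model ``$\HH^2$-orbifold metric times a circle'' omits the twist and omits Euclidean pieces such as the twisted $I$-bundle over the Klein bottle. The invariant cusp interpolation on hyperbolic pieces needs the pillow and turnover cases of Leeb's argument (Proposition~\ref{prop:atoroidal}).
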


Theorem \ref{thm:main} was previously known when either (1) $M$ has trivial JSJ decomposition (i.e.\ $M$ is either Seifert fibered or atoroidal) or (2) $G$ acts freely on $M$; see Section \ref{sec:previous-work}. In terms of what was not previously known, the most interesting case of Theorem \ref{thm:main} is when $M$ is a graph manifold and the action of $G$ on $M$ is not free; see Section \ref{sec:proof-discussion} for some discussion of the proof. 

Combining Theorem \ref{thm:main} with the work of Zimmermann and Zieschang--Zimmermann mentioned above, we answer Question \ref{q:main} in dimension 3 for $G$ in the orientation-preserving subgroup $\Out^+(\pi_1M)$, a subgroup of $\Out(\pi_1M)$ of index at most 2. 

\begin{cor}[Geometric Nielsen realization]\label{cor:main}
Let $M$ be a closed, oriented, nonpositively curved $3$-manifold. For any finite subgroup $G<\Out^+(\pi_1M)$, there is a nonpositively curved metric $\mu$ so that $G$ is induced by a finite subgroup $H$ of isometries of $(M,\mu)$.
\end{cor}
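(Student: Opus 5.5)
The corollary should follow by combining Theorem \ref{thm:main} with the Zimmermann / Zieschang--Zimmermann realization results cited above. The one real task is to check that the diffeomorphisms they produce are orientation-preserving, so that Theorem \ref{thm:main} applies.

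\textbf{Step 1: realize $G$ by diffeomorphisms.} Since $M$ is closed and nonpositively curved, it is aspherical by Cartan--Hadamard, and $\pi_1M$ is torsion-free. Moreover $M$ is irreducible with infinite fundamental group, and is either Haken or geometric. Let $G<\Out^+(\pi_1M)$ be finite. By \cite{zimmermann,zimmermann-zieschang, zimmermann-extensions} there is a finite group $H$ of diffeomorphisms of $M$ together with an isomorphism $H\to G$ given by $h\mapsto h_*\in\Out(\pi_1M)$.

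In the geometric case with trivial JSJ decomposition, one can instead invoke Mostow rigidity or the Seifert-fibered realization results directly. The key input is that the group extension $1\to\pi_1M\to E\to G\to 1$ determined by $G$ is realized by a properly discontinuous action of $E$ on the universal cover $\widetilde M$. Here $\pi_1M$ has trivial center except in the Seifert-fibered case, where the extension theory is handled separately in the cited works.

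\textbf{Step 2: check orientations.} A homotopy equivalence of a closed orientable aspherical manifold preserves orientation exactly when it acts by $+1$ on $H_3(M;\Z)\cong H_3(\pi_1M;\Z)\cong\Z$. That action depends only on the outer automorphism class. Since $h_*\in\Out^+(\pi_1M)$, every $h\in H$ is therefore orientation-preserving.

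\textbf{Step 3: apply the theorem and lift isometries.} Applying Theorem \ref{thm:main} to the finite group $H$ gives a nonpositively curved metric $\mu$ preserved by $H$. So $H$ is a finite group of isometries of $(M,\mu)$ inducing $G$.

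The only delicate point is the precise form of the realization theorem. It must give a genuine finite group $H$ mapping isomorphically onto $G$, not merely individual realizations of the elements of $G$. I would cite the version for Haken manifolds and for Seifert-fibered spaces with infinite $\pi_1$, noting that nonpositively curved 3-manifolds are covered by these cases together with closed hyperbolic manifolds.
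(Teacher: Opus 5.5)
Your overall route is the paper's: realize $G$ by a finite group of diffeomorphisms via Zimmermann and Zieschang--Zimmermann, check that these diffeomorphisms preserve orientation, and apply Theorem~\ref{thm:main}. The problem is in Step 1, at exactly the point you single out as delicate. You assert, and say you need, a finite group $H<\Diff(M)$ mapping \emph{isomorphically} onto $G$. That is false in general when $M$ is Seifert fibered, and the paper says so explicitly right after stating the corollary.

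Here is why it fails. When $Z(\pi_1M)\neq 1$ (e.g.\ $M=\Sigma_g\times S^1$), a finite $G<\Out(\pi_1M)$ does not determine an extension of $\pi_1M$. The abstract kernel $G\to\Out(\pi_1M)$ can even have a nonzero obstruction in $H^3(G;Z(\pi_1M))$, in which case no extension $1\to\pi_1M\to E\to G\to 1$ exists. But a finite $H<\Diff(M)$ with $H\to G$ an isomorphism would produce such an extension: take $E$ to be the group of all lifts of elements of $H$ to $\widetilde M$. So in those cases no such $H$ exists. Your remark that ``the extension theory is handled separately in the cited works'' passes over precisely the place where the isomorphism breaks down.

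The fix is to use the weaker, correct input, which matches the weaker conclusion the corollary actually states. By Zimmermann \cite{zimmermann-extensions}, there is a finite group $H<\Diff(M)$ whose image under $H\to\Out(\pi_1M)$ is $G$. The kernel is possibly nontrivial but finite; in the Seifert case it can consist of, e.g., fiber rotations, which are isotopic to the identity.

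With this input, your Step 2 still works for every $h\in H$, including elements of the kernel. The action on $H_3(M;\Z)\cong H_3(\pi_1M;\Z)$ factors through $\Out(\pi_1M)$, and $G<\Out^+(\pi_1M)$, so each $h$ preserves orientation. Step 3 then applies Theorem~\ref{thm:main} to $H$ verbatim, giving $H<\Isom(M,\mu)$ inducing $G$. This is why the corollary says ``induced by a finite subgroup $H$'' rather than asking for $H\cong G$.
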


We note that the finite subgroup $H<\Isom(M,\mu)$ in the statement of Corollary \ref{cor:main} is not necessarily isomorphic to the given group $G$. Corollary \ref{cor:main} only gives a finite group $H<\Isom(M,\mu)$ such that the image of $H\hookrightarrow\Isom(M,\mu)\to \Out(\pi_1M)$ is $G$. The map $H\to G$ may have (finite) kernel. This extra flexibility is necessary only when $M$ is Seifert fibered because in that case not every subgroup $G<\Out(\pi_1M)$ can be lifted to $\Diff(M)$; nevertheless, Zimmermann \cite{zimmermann-extensions} showed that there is always a finite extension of $G$ (like $H$) that does lift.

\subsection{Previous work related to Theorem \ref{thm:main}} \label{sec:previous-work}

There are two main cases where Theorem \ref{thm:main} was previously known. 
 
If the JSJ decomposition of $M$ is trivial, then $M$ is geometric (in the sense of Thurston geometries) by geometrization; see \cite{scott,perelman-entropy,perelman-extinction,perelman-surgery}. For geometric $M$, every finite group action on $M$ preserves a geometric metric by work of Meeks--Scott \cite[Thm.\ 2.1]{meeks-scott} and Dinkelbach--Leeb \cite[Thm.\ H]{dinkelbach-leeb}. This stronger statement implies Theorem \ref{thm:main} when $M$ has trivial JSJ decomposition. Note that a geometric 3-manifold that has a nonpositively curved metric has one of the Thurston geometries $\mathbb H^3$, $\mathbb E^3$, or $\mathbb H^2\times\R$ \cite{eberlein}. 

If the JSJ decomposition of $M$ nontrivial and $G$ acts \emph{freely} on $M$, then Theorem \ref{thm:main} was shown by the second-named author in \cite{byron}, using in particular work of Leeb \cite{leeb}. The assumption that $G$ acts freely on $M$ implies that the quotient $N:=M/G$ is a manifold, and it suffices to show that $N$ has a nonpositively curved metric. If the JSJ decomposition of $N$ has at least one atoroidal component, then $N$ is nonpositively curved by Leeb \cite{leeb}. If the JSJ decomposition of $N$ has only Seifert fibered components (i.e. $N$ is a graph manifold), the situation is subtle because not all graph manifolds admit nonpositively curved metrics. It is a priori conceivable that a nonpositively curved graph manifold could cover a graph manifold with no nonpositively curved metric, but \cite{byron} shows that this does not occur.

Given these results, to prove Theorem \ref{thm:main}, it remains to consider the case when the JSJ decomposition of $M$ is \emph{nontrivial} and $G$ does \emph{not} act freely on $M$. 
In this case $M/G$ is a 3-dimensional orbifold. We extend the arguments from \cite{byron} to this setting, overcoming several technical issues caused by orbifold singularities. While our proof builds on the ideas of \cite{byron}, this paper can be read independently.

\subsection{About the proof of Theorem \ref{thm:main}}\label{sec:proof-discussion}

Let $M$ be a closed, orientable 3-manifold and let $G$ be a finite group acting faithfully on $M$ by orientation-preserving diffeomorphisms. The quotient $M/G$ is a 3-dimensional orbifold that we denote by $\mathcal M$. Our goal is to show that $\mathcal M$ has a nonpositively curved metric. (Since $\mathcal M$ is only an orbifold, by definition this means that the universal cover has a nonpositively curved metric for which the deck group $\pi_1(\mathcal M)$ acts isometrically, where $\pi_1(\cM)$ denotes the orbifold fundamental group.) 

We use the JSJ decomposition to organize the problem. As for irreducible 3-manifolds, the orbifold $\mathcal M$ has a JSJ decomposition: there exists a unique (up to isotopy) sub-orbifold $\mathcal Z\subset \mathcal M$ whose components are incompressible 2-dimensional orbifolds with zero Euler characteristic such that complement $\cM\setminus\cZ$ consists of atoroidal or Seifert fibered components; see e.g.\ \cite[Thm.\ 3.3]{BMP}.

\paragraph{Geometric extension problem for flat boundary-metrics.} 

Ultimately we will show that $\mathcal M$ has a nonpositively curved metric with the additional property that a JSJ orbifold $\mathcal Z\subset\mathcal M$ is totally geodesic and flat. In particular, our strategy is to find a flat metric on $\cZ$ that extends to a nonpositively curved metric on each component of $\cM\mid\cZ$, where $\cM\mid\cZ=(\cM\setminus\cZ)\cup(\cZ\sqcup\cZ)$ denotes the real blow up of $\cM$ along $\cZ$. 

Toward this goal, it is helpful to know, for an atoroidal or Seifert fibered orbifold $\mathcal X$, when a flat metric on $\partial\mathcal X$ extends to a nonpositively curved metric on $\mathcal X$ for which the boundary is totally geodesic. When such a nonpositively curved metric extension exists we say that the flat boundary-metric is \emph{geometrically extendable}. We study this problem thoroughly in Section \ref{sec:extend}. 

When $\mathcal X$ is atoroidal, every flat boundary-metric is geometrically extendable (Proposition \ref{prop:atoroidal}); this generalizes a result of Leeb \cite[Prop.\ 2.3]{leeb} from manifolds to orbifolds. 

The Seifert orbifold case is more subtle. When $X$ is a Seifert fibered \emph{manifold}, Leeb gives a simple characterization of when a flat boundary-metric is geometrically extendable. We extend Leeb's result to certain Seifert orbifolds and also recast it (co)homologically: 
\begin{itemize}
\item (homological criterion for extending geometrically) Let $\cX\to\cO$ be an orientable Seifert fibered orbifold. Assume that $\cO$ orientable and $\cX\neq T^2\times[0,1]$. Given a flat metric $\mu_\partial$ on $\partial\cX$, there exists a class $\beta_{\mu_\partial}\in H^1(\partial\cX;\R)$ with the property that $\mu_\partial$ extends geometrically if and only if $\beta_{\mu_\partial}$ is in the image of the homomorphism $H^1(\cX;\R)\to H^1(\partial\cX;\R)$ induced by inclusion. 
\end{itemize}

See Proposition \ref{prop:leeb} for a statement that also includes the case $\cX=T^2\times[0,1]$. 

When the base orbifold $\cO$ is orientable (as in the statement of the preceding homological criterion), we call $\cX\to\cO$ \emph{totally orientable}. Any orientable $\mathcal X$ has a canonical totally-orientable double cover $\mathcal X^+\to\mathcal X$ that covers  the orientation double cover of the base orbifold. Using Proposition \ref{prop:leeb}, we show the following. 
\begin{itemize}
\item A flat boundary-metric on $\mathcal X$ extends geometrically  if and only if its pullback to $\cX^+$ extends geometrically  (Proposition \ref{prop:totally-orientable-cover}). 
\item If $X\to\cX^+$ is a cover, then a flat boundary-metric on $\cX^+$ extends geometrically  if and only if its pullback to $X$ extends geometrically  (Proposition \ref{prop:cover-totally-orientable}).  
\end{itemize}

\paragraph{Assembling nonpositively curved metrics.}

To construct a nonpositively curved metric on $\mathcal M=M/G$, we separate two cases depending on whether or not $M$ (or equivalently $\mathcal M$) has an atoroidal component in its JSJ decomposition. The case when there is an atoroidal component is easier and is similar to the manifold case in \cite{leeb}; see Section \ref{sec:atoroidal-proof}. Here we focus on the more interesting case when $M$ is a graph manifold. We consider the following pullback diagram. 
\begin{equation} \label{eqn:regular-char-cover-diagram}
    \begin{xy}
    (-20,0)*+{N}="A";
    (0,0)*+{\mathcal M^+}="B";
    (0,-12)*+{\mathcal M}="D";
    (-20,-12)*+{M}="C";
    {\ar"A";"C"}?*!/_3mm/{};
    {\ar "B";"D"}?*!/_3mm/{};
    {\ar "C";"D"}?*!/^2mm/{\scriptstyle G};
    {\ar "A";"B"}?*!/_3mm/{};
    {\ar "A";"D"}?*!/_2mm/{\scriptstyle H};
    \end{xy}
\end{equation}
Here $\mathcal M^+\to\cM$ is a double cover such that each component of the JSJ decomposition of $\cM^+$ is totally orientable. Using this diagram, let $\mathcal Z\subset\mathcal M$ be a JSJ orbifold, and let $Z\subset N$ be the pre-image of $\mathcal Z$ under $N\to\mathcal M$. Then $Z$ gives a decomposition of $N$ into atoroidal and Seifert components, but it might not be minimal (some components of $N\mid Z$ may be $T^2\times[0,1]$). By assumption $M$ has a nonpositively curved metric, so $N$ is also nonpositively curved. Writing $H$ for the deck group of $N\to\cM$, we show that there is a flat metric on $Z$ that is both $H$-invariant and geometrically extendable on each component of $N\mid Z$ (Proposition \ref{prop:average-JSJ}); a key ingredient in this argument is the observation that for any two flat metrics $\mu_1,\mu_2$ on either $T^2$ or $S^2(2,2,2,2)$, there is a third flat metric that has the same Levi-Civita connection as $\mu_1$ and projects to the same point in the Teichm\"uller space as $\mu_2$ (see Lemma \ref{lem:torus-pillow}). By construction, the flat metric on $Z$ satisfies the homological criterion (Proposition \ref{prop:leeb}) on each component of $N\mid Z$. Since the metric on $Z$ is $H$-invariant, it descends to a metric on $\cZ$, and this metric extends geometrically  on each component of $\cM\mid\cZ$ by the aforementioned Propositions \ref{prop:totally-orientable-cover} and \ref{prop:cover-totally-orientable}. 

\paragraph{Conventions.} Throughout this paper, curvature always means sectional curvature. We work in the smooth category of manifolds and orbifolds. All 3-dimensional orbifolds are assumed to be orientable. 

\paragraph{Acknowledgements.}The first author thanks Shmuel Weinberger for pointing out that Question \ref{q:main} was unresolved in dimension 3. 

\paragraph{Computational resource disclosure.}
The authors have used LLMs to find references, check the proofs, and proofread the paper. The authors take full responsibility for the mathematical content of this paper.

\section{Flat metrics on Euclidean 2-orbifolds} \label{sec:2-orbifolds}

By a Euclidean 2-orbifold we mean a 2-orbifold that admits a flat metric. These are the topological 2-orbifolds with zero Euler characteristic. Up to (orbifold) diffeomorphism, there are five closed orientable Euclidean 2-orbifolds, pictured in Figure \ref{fig:surface-orbifolds}.  We refer to $S^2(2,2,2,2)$ as the pillow and each of $S^2(3,3,3)$, $S^2(2,4,4)$, and $S^2(2,3,6)$ as a turnover. These Euclidean 2-orbifolds appear in the JSJ decomposition of a 3-orbifold. Our procedure for putting nonpositively curved metrics on 3-orbifolds is guided by the behavior of flat metrics on Euclidean 2-orbifolds. In this section we collect some facts about these flat metrics that will be used in later sections.

\begin{figure}[h!]
\centering
% \begin{tikzpicture}[x=1pt,y=1pt]
% \node[anchor=south west,inner sep=0] at (0,0)
%     {\includegraphics[scale=.4]{2-orbifolds.pdf}};
% \node at (75,85) {$T^2$};
% \node at (180,85) {$S^2(2,2,2,2)$};
% \node at (45,-5) {$S^2(3,3,3)$};
% \node at (140,-5) {$S^2(2,4,4)$};
% \node at (245,-5) {$S^2(2,3,6)$};
% \end{tikzpicture}

    \def\svgwidth{0.7\columnwidth}
    %% Creator: Inkscape 1.3.2 (091e20e, 2023-11-25), www.inkscape.org
%% PDF/EPS/PS + LaTeX output extension by Johan Engelen, 2010
%% Accompanies image file 'flat-2-orbifolds.pdf' (pdf, eps, ps)
%%
%% To include the image in your LaTeX document, write
%%   \input{<filename>.pdf_tex}
%%  instead of
%%   \includegraphics{<filename>.pdf}
%% To scale the image, write
%%   \def\svgwidth{<desired width>}
%%   \input{<filename>.pdf_tex}
%%  instead of
%%   \includegraphics[width=<desired width>]{<filename>.pdf}
%%
%% Images with a different path to the parent latex file can
%% be accessed with the `import' package (which may need to be
%% installed) using
%%   \usepackage{import}
%% in the preamble, and then including the image with
%%   \import{<path to file>}{<filename>.pdf_tex}
%% Alternatively, one can specify
%%   \graphicspath{{<path to file>/}}
%% 
%% For more information, please see info/svg-inkscape on CTAN:
%%   http://tug.ctan.org/tex-archive/info/svg-inkscape
%%
\begingroup%
  \makeatletter%
  \providecommand\color[2][]{%
    \errmessage{(Inkscape) Color is used for the text in Inkscape, but the package 'color.sty' is not loaded}%
    \renewcommand\color[2][]{}%
  }%
  \providecommand\transparent[1]{%
    \errmessage{(Inkscape) Transparency is used (non-zero) for the text in Inkscape, but the package 'transparent.sty' is not loaded}%
    \renewcommand\transparent[1]{}%
  }%
  \providecommand\rotatebox[2]{#2}%
  \newcommand*\fsize{\dimexpr\f@size pt\relax}%
  \newcommand*\lineheight[1]{\fontsize{\fsize}{#1\fsize}\selectfont}%
  \ifx\svgwidth\undefined%
    \setlength{\unitlength}{622.79980277bp}%
    \ifx\svgscale\undefined%
      \relax%
    \else%
      \setlength{\unitlength}{\unitlength * \real{\svgscale}}%
    \fi%
  \else%
    \setlength{\unitlength}{\svgwidth}%
  \fi%
  \global\let\svgwidth\undefined%
  \global\let\svgscale\undefined%
  \makeatother%
  \begin{picture}(1,0.59891124)%
    \lineheight{1}%
    \setlength\tabcolsep{0pt}%
    \put(0,0){\includegraphics[width=\unitlength,page=1]{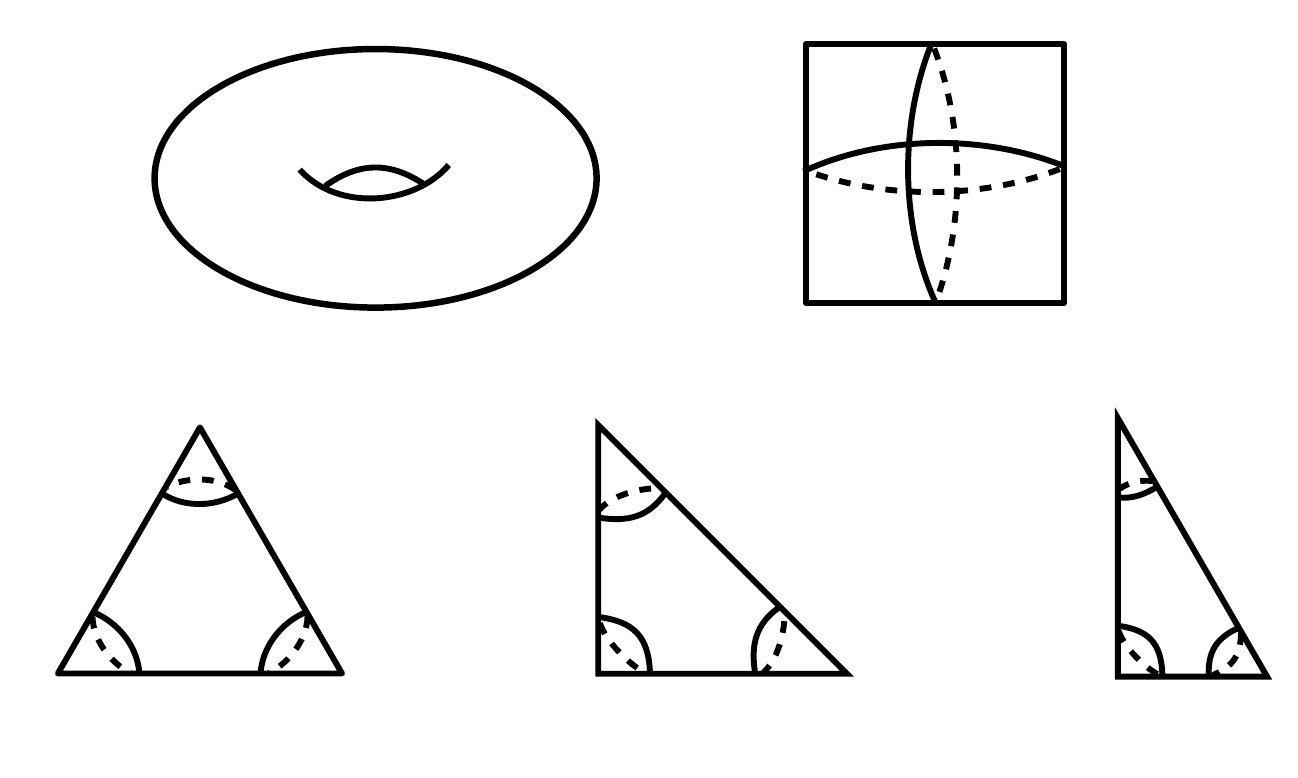}}%
    \put(0.27234226,0.30611133){\color[rgb]{0,0,0}\makebox(0,0)[lt]{\lineheight{1.25}\smash{\begin{tabular}[t]{l}$T^2$\end{tabular}}}}%
    \put(0.65276601,0.30726795){\color[rgb]{0,0,0}\makebox(0,0)[lt]{\lineheight{1.25}\smash{\begin{tabular}[t]{l}$S^2(2, 2, 2, 2)$\end{tabular}}}}%
    \put(0.10458122,0.02220852){\color[rgb]{0,0,0}\makebox(0,0)[lt]{\lineheight{1.25}\smash{\begin{tabular}[t]{l}$S^2(3, 3, 3)$\end{tabular}}}}%
    \put(0.48502866,0.02332822){\color[rgb]{0,0,0}\makebox(0,0)[lt]{\lineheight{1.25}\smash{\begin{tabular}[t]{l}$S^2(2, 4, 4)$\end{tabular}}}}%
    \put(0.86299713,0.02410436){\color[rgb]{0,0,0}\makebox(0,0)[lt]{\lineheight{1.25}\smash{\begin{tabular}[t]{l}$S^2(2, 3, 6)$\end{tabular}}}}%
  \end{picture}%
\endgroup%

\caption{Closed, orientable Euclidean 2-orbifolds. Numbers in parentheses indicate the order of the cone points.}
\label{fig:surface-orbifolds}
\end{figure}

Two of the main results in the section (Lemmas \ref{lem:turnover} and \ref{lem:torus-pillow}) concern when two flat metrics on $\cO$

\subsection{Flat metrics up to isotopy on Euclidean 2-orbifolds} 

We say two metrics $\mu,\mu'$ on an orbifold $\cO$ are \emph{isotopic} if there exists a diffeomorphism isotopic to the identity $\phi\in\Diff_0(\cO)$ such that $\phi^*(\mu)=\mu'$. In this section we recall the classification of flat metrics up to isotopy. See also \cite{bettiol}. 

Regarding the definition of isotopic, we recall that the diffeomorphism group of a (good) orbifold $\cO$ is defined as the normalizer of the deck group $\pi_1(\cO)$ in the diffeomorphism group $\Diff(\widetilde{\cO})$ of the universal cover. We also remark that, by convention, if $\mathcal O$ is an orbifold, then $\pi_1(\cO)$ denotes the orbifold fundamental group. When we wish to refer to the fundamental group of the underlying topological space, we write $\pi_1(|\cO|)$.

\subsubsection{Flat metrics on Euclidean turnovers} \label{subsec:interpolate-turnovers}

\begin{lem} \label{lem:turnover}
Let $\mathcal{T}$ be a Euclidean turnover. Any two flat metrics on $\cT$ with the same area are isotopic. 
\end{lem}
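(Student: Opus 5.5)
The plan is to work in the universal cover $\widetilde{\cT}=\R^2$ and use the rigidity of the orbifold fundamental group. A flat metric $\mu$ on $\cT$ lifts to a $\pi_1(\cT)$-invariant flat metric on $\R^2$. Because $\R^2$ is simply connected and complete, this lift is isometric to the Euclidean plane by a developing map $D_\mu\colon \R^2\to\EE^2$, and the deck group becomes a discrete cocompact subgroup $\Gamma_\mu<\Isom^+(\EE^2)$. This subgroup is a $(p,q,r)$ triangle group with $(p,q,r)\in\{(3,3,3),(2,4,4),(2,3,6)\}$. The first step is to show that $\Gamma_\mu$ is determined up to conjugation in $\Isom^+(\EE^2)$ by its covolume, which is the area of $\mu$. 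The rotation generators have fixed points $x_1,x_2,x_3$ with rotation angles $2\pi/p,2\pi/q,2\pi/r$ and product the identity. This forces the triangle $x_1x_2x_3$ to have angles $\pi/p,\pi/q,\pi/r$. A Euclidean triangle with prescribed angles is determined up to similarity, and the scale is fixed by the area: the area of $\cT$ is twice the triangle's area. So two flat metrics $\mu,\mu'$ of equal area give representations $\rho,\rho'\colon\pi_1(\cT)\to\Isom^+(\EE^2)$ that differ by conjugation by some $A\in\Isom^+(\EE^2)$.

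Replacing $D_{\mu'}$ by $A^{-1}\circ D_{\mu'}$, we may assume $\rho=\rho'$. Then $\Phi:=D_{\mu}^{-1}\circ D_{\mu'}$ is a $\pi_1(\cT)$-equivariant diffeomorphism of $\R^2$ with $\Phi^*\tilde\mu=\tilde\mu'$. It normalizes (in fact commutes with) the deck group, so it descends to an orbifold diffeomorphism $\phi$ of $\cT$ with $\phi^*\mu=\mu'$.

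It remains to show that $\phi$ is isotopic to the identity in $\Diff(\cT)$. This is the main subtlety, since conjugation by $A$ could induce a nontrivial outer automorphism, and orientation must be handled. By construction $\Phi$ induces the identity automorphism of $\pi_1(\cT)$, because it commutes with each deck transformation. An equivariant diffeomorphism of $\R^2$ commuting with a cocompact lattice of this type is equivariantly isotopic to the identity by the straight-line homotopy $\Phi_t(x)=(1-t)x+tx'$ in the affine structure coming from $D_\mu$. This homotopy is equivariant because $\Gamma_\mu$ acts by affine maps, and it consists of diffeomorphisms for the same reason as in Earle–Eells / the torus case. Alternatively, we can take the lift of $\phi$ to the torus cover $T^2\to\cT$: there it commutes with translations, so the straight-line homotopy to the identity is through diffeomorphisms.

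The point to watch is that we must choose the conjugating isometry $A$ so that $\rho'=A\rho A^{-1}$ holds on the nose as representations. We must not settle for equality of images, which could differ by an automorphism permuting cone points. Since the cone points of a turnover have distinct orders, except for the case $(3,3,3)$, matching orders forces the matching of generators. In the $(3,3,3)$ case we must use the marking. Both $\mu$ and $\mu'$ are metrics on the same orbifold $\cT$, so each cone point $c_i$ of $\cT$ corresponds to the fixed point of the same generator under both developing maps. Therefore the labeled triangles are congruent with labels matched, and an orientation-preserving isometry $A$ carrying one labeled triangle to the other exists because both triangles are oriented consistently by the orientation of $\cT$.
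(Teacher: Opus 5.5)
Your first two steps are sound. By fixing the labeled generators of the deck group and using orientation-preserving developing maps, you correctly produce an orientation-preserving orbifold diffeomorphism $\phi$ with $\phi^*\mu=\mu'$, and its lift $\Phi$ commutes with every deck transformation. (A small slip: $S^2(2,4,4)$ also has two cone points of the same order. Your marking argument treats all three turnovers uniformly, so nothing is lost.)

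The gap is the final step, showing $\phi\in\Diff_0(\cT)$. The straight-line homotopy $\Phi_t=(1-t)\id+t\Phi$ is equivariant, but it is not a path of diffeomorphisms in general, and neither Earle--Eells nor the torus case says otherwise. Since $\mu'$ is arbitrary, $\Phi$ can be essentially any equivariant diffeomorphism. For example:
\begin{itemize}
\item take $\phi$ supported in a small $\mu$-flat disk around a regular point, given there by $v\mapsto R_{\theta(|v|)}v$, with $\theta\equiv\pi$ near $0$ and $\theta\equiv 0$ near the edge of the disk;
\item set $\mu'=\phi^*\mu$.
\end{itemize}
Your normalization forces your construction to return exactly the commuting lift of this $\phi$. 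Then $\Phi_{1/2}$ collapses each lift of the inner disk to a point. The same example defeats the torus-cover variant. So all you actually have is a homotopy through smooth equivariant maps, and upgrading it to an isotopy needs a genuinely topological input.

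That input is easy to supply from what you have already built. Since $\Phi$ commutes with each rotation generator, it fixes their fixed points. Hence $\phi$ is orientation-preserving and fixes each cone point. The orientation-preserving mapping classes of a sphere with three marked points that fix each point are trivial (e.g.\ by the Alexander method, as the paper uses for the pillow). Therefore $\phi$ is isotopic to the identity. This is precisely the fact behind the paper's second observation, that $\Mod(\cT)$ is the group of order-preserving permutations of the cone points.

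The paper's route has two halves. First, the flat metric is unique up to scale and diffeomorphism, because a turnover is the double of a triangle with prescribed angles. Second, that diffeomorphism is corrected by a symmetry of the triangle, which is an isometry in the same mapping class. Your rigid matching of generators makes the first half more precise and removes the need to realize mapping classes by isometries. It does not remove the need for the mapping class group computation.
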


Lemma \ref{lem:turnover} follows from two simple facts. First, the moduli space of flat metrics on $\cT$ with fixed area is a point, since $\cT$ is obtained by doubling a Euclidean triangle whose angles are determined by the cone orders, so the flat metric is unique up to scaling and diffeomorphism. Second, every mapping class of $\cT$ is represented by an isometry of any flat metric on $\cT$ because symmetries of the triangle induce isometries of the turnover; here it is helpful to recall that the mapping class group of a turnover is isomorphic to the group of order-preserving permutations of its cone points. 

Lemma \ref{lem:turnover} can be interpreted as saying the Teichm\"uller space $\mathrm{Teich}(\mathcal T)$ of flat metrics on a turnover is diffeomorphic to $\mathbb R_+=(0,\infty)$, given by the area. Indeed, after fixing the area, any two flat metrics with the same area differ by a diffeomorphism, and this diffeomorphism can be chosen up to isotopy to be an isometry.

\subsubsection{Flat Metrics on the torus and the pillow } \label{sec:torus-pillow}

In this section $\mathcal T$ denotes either the torus $T^2$ or the pillow $S^2(2,2,2,2)$. Our main observation in this section is the following Lemma \ref{lem:torus-pillow}. To state it, we say two flat metrics on $\mathcal T$ are \emph{affinely equivalent} if they have the same Levi-Civita connection. Recall (see e.g.\ \cite[Ch.\ 4,5]{lee}) that the Levi-Civita connection is the unique torsion-free affine connection $\nabla$ for which the metric is parallel $\nabla \mu=0$ (by the fundamental theorem of Riemannian geometry). An affine connection defines parallel transport (of tangent vectors along curves) and in particular it determines geodesics (curves whose tangent vector is parallel); affinely equivalent metrics have the same geodesics but differ according to the length of these geodesics and the angles at which they meet.

\begin{lem}\label{lem:torus-pillow}
Let $\mathcal T$ be the torus or pillow. For any two flat metrics on $\cT$, there is a unique third metric that is affinely equivalent to the first and isotopic to the second. 
\end{lem}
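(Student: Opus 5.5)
We work in the universal cover. For the torus, $\cT=\R^2/\Lambda$ with $\Lambda\cong\Z^2$; for the pillow, $\cT=\R^2/\Gamma$ with $\Gamma=\Lambda\rtimes\{\pm1\}$, where $\Lambda<\Gamma$ is the translation subgroup of index 2. Fix the first flat metric $\mu_1$ and develop it: the universal cover becomes $\R^2$ with the standard Euclidean metric, and $\pi_1(\cT)$ acts by Euclidean isometries, namely translations by a lattice $\Lambda$, together with the rotations $x\mapsto -x+\lambda$ in the pillow case.

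\textbf{Affine equivalence.} A flat metric affinely equivalent to $\mu_1$ has the same Levi-Civita connection $\nabla$. Its lift to $\R^2$ is a $\nabla$-parallel metric, hence a constant quadratic form $Q$, and every constant positive definite $Q$ descends, since linear parts of deck transformations are $\pm\Id$, which preserve $Q$. So the metrics affinely equivalent to $\mu_1$ correspond bijectively to positive definite quadratic forms $Q$ on $\R^2$. Equivalently, they are the metrics $A^*\mu_1$ with $A\in\GL_2(\R)$ acting linearly on the universal cover; these descend to the same $\cT$ and are taken up to $A\sim OA$ with $O\in O(2)$.

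\textbf{Isotopy classes.} A flat metric on $\cT$ up to isotopy is determined by its marked translation lattice: the map $\Lambda=\pi_1(T^2)$ (or the index-2 translation subgroup of $\pi_1$ of the pillow) $\to\R^2$, taken up to rotation. For $\mu_1$ this marked lattice is fixed. For the metric $Q$, the marked lattice is the same homomorphism $\Lambda\to\R^2$, now measured with $Q$. So the isotopy class of the $Q$-metric is the Gram data $\lambda\mapsto Q(\lambda,\lambda)$ on the fixed basis of $\Lambda$.

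Given $\mu_2$, its isotopy class is determined by a positive definite symmetric bilinear form $B_2$ on $\Lambda\otimes\R$ via $\lambda\mapsto|\lambda|^2_{\mu_2}$. Identifying $\Lambda\otimes\R=\R^2$ through the $\mu_1$-developing map, there is exactly one $Q$ with $Q|_\Lambda=B_2$, namely $Q=B_2$ transported. This gives existence and uniqueness.

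\textbf{Main obstacle.} The delicate step is the claim that two flat metrics are isotopic if and only if their marked Gram forms on $\Lambda$ agree. In other words, $\Teich(\cT)$ of flat metrics is exactly the space of positive definite forms on $H_1(\cT;\R)$ (resp.\ on $\Lambda$), with no further quotient.

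For the torus: an isometry between flat tori that acts trivially on $\pi_1$ is homotopic, hence isotopic, to the identity, because $\Diff_0(T^2)$ is the identity component and homotopy implies isotopy on $T^2$. Conversely, equal Gram forms give an affine map between the developed planes that conjugates the deck actions identically.

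For the pillow, we must additionally check two things. First, an isometry acting trivially on $\pi_1(\cT)$ up to conjugacy is isotopic to the identity; we use that $\Mod(S^2(2,2,2,2))$ acts faithfully on $\pi_1$ up to inner automorphisms. Second, the marking of $\Gamma$ is recovered from that of $\Lambda$ together with the rotation centers, which lie at $\tfrac12\Lambda$ and so are determined by the developing map.

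One subtlety is that $-\Id$ acts trivially on Gram data, and it is realized by the hyperelliptic isometry, which is isotopic to the identity on the pillow and is central in the torus case. We handle this by observing that $-\Id$ preserves the metric $Q$, so it does not affect uniqueness of the metric itself.
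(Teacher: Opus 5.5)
Your proposal is correct and follows essentially the same route as the paper. The paper likewise shows that isotopy classes of flat metrics correspond exactly to inner products (Gram forms) on $H_1(T^2;\R)$ of the translation lattice. It also shows that metrics affinely equivalent to the first metric are parallel, so each is determined by a single constant form, and that the Gram-form map restricted to this family is a bijection. The only differences are in how you certify that equal Gram forms imply isotopic metrics: you use developing maps and faithfulness of $\Mod(\cT)\to\Out(\pi_1(\cT))$, whereas the paper builds the isometry from flows of parallel unit vector fields and uses the Alexander method for the pillow.
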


Lemma \ref{lem:torus-pillow} plays a key role in the proof of Theorem \ref{thm:main}. It is used in Proposition \ref{prop:atoroidal}, Lemma \ref{lem:free-boundary}, and Proposition \ref{prop:average-JSJ}. 

\begin{proof}[Proof of Lemma \ref{lem:torus-pillow}]
Let $\Flat(\mathcal T)$ denote the space of flat metrics on $\mathcal T$. 
Fixing a basepoint $*\in\mathcal T$; if $\cT$ is the pillow we choose $*$ to be one of the cone points. There is a fiber sequence
\[
\Diff_0(\mathcal T,*)\to\Flat(\mathcal T)\to \Flat(\mathcal T)/\Diff_0(\mathcal T,*). 
\] 
See \cite[Thm.\ 7.4]{ebin}, which is generalized to orbifolds in \cite[Thm.\ 3.1.2]{lund}. For the torus, the basepoint is needed for this fibration to be a principal bundle, since each flat metric has a translation group of isometries. For the pillow, with our choice of $*$, we have $\Diff_0(\cT,*)=\Diff_0(\cT)$. In either case the quotients $\mathrm{Flat}(T^2)/\Diff_0(T^2)$ and $\mathrm{Flat}(T^2)/\Diff_0(T^2,*)$ are homeomorphic.

In brief, we show that for each $\mu\in\Flat(\mathcal T)$ the set $A(\mu)$ of flat metrics affinely equivalent to $\mu$ defines a section of this bundle. Since the bundle is a principal bundle this directly implies the claim. 

\paragraph{Claim 1.} There is a homeomorphism $\Flat(\mathcal T)/\Diff_0(\mathcal T)\cong \GL_2^+(\R)/\SO(2)$. This is well-known for our cases of interest; Teichm\"uller spaces of flat metrics on Euclidean orbifolds are studied more generally in \cite[\S5]{bettiol}. We give a short explanation of Claim 1 using some basic differential geometry, which we will also use to give a natural section of this bundle through any $\mu\in\Flat(\mathcal T)$ (and use again in Proposition \ref{prop:leeb}). 

To define a map 
\[\begin{array}{rrcll}
B:&\Flat(\mathcal T)&\to&\GL_2^+(\R)/\SO(2)\\
&\mu&\mapsto&B_\mu\end{array}\] (c.f.\ \cite[\S2.3]{leeb} in the torus case) we interpret the codomain as the space of inner products on $\R^2$ (a coset $x\SO(2)$ determines the inner product with matrix $xx^t$). Let $T^2\to\mathcal T$ be the cover corresponding to the maximal abelian subgroup of $\pi_1(\mathcal T)$. Let $b,f\in H_1(T^2)$ be a basis. Given a flat metric $\mu$, let $b_\mu,f_\mu$ be geodesic representatives for $b,f$ with respect to $\mu$ (or technically its pullback to $T^2$) and define an inner product $B_\mu$ on $H_1(T^2)\otimes\R$ by 
\begin{equation}\label{eqn:inner-product}B_\mu(b,b)=\mathrm{Length}(b_\mu)^2, \>\>\>B_\mu(f,f)=\mathrm{Length}(f_\mu)^2, \end{equation}
and $B_\mu(b,f)$ is the $\mu$-inner product between the tangent vectors to $b_\mu$ and $f_\mu$ measured in the tangent space of the unique point of intersection. 

It is easy to see that $B$ is surjective and factors through $\Flat(\mathcal T)\to\Flat(\mathcal T)/\Diff_0(\mathcal T)$. We show that the induced map $\Flat(\mathcal T)/\Diff_0(\mathcal T)\to\GL_2^+(\R)/\SO(2)$ is injective. 

First consider the case when $\mathcal T=T^2$. Fix a basepoint $*\in T^2$. Let $\mu,\nu$ be flat metrics and assume $B_\mu=B_\nu$. We will show there is $\phi\in\Diff_0(T^2,*)$ so that $\phi^*\mu=\nu$. Fix a basis $b,f\in H_1(T^2)$ and let $V^b_\mu,V^f_\mu$ be $\mu$-parallel unit vector fields whose trajectories are closed geodesics representing $b,f$, respectively. Define $V^b_\nu,V^f_\nu$ similarly. By assumption the orbits of $V^b_\mu$ and $V^b_\nu$ (resp.\ $V^f_\mu$ and $V^f_\nu$) have the same length, which we denote $\ell_b$  (resp.\ $\ell_f$). Define $\phi_\mu: T^2\to S^1\times S^1$ by sending the point 
\[\mathrm{Flow}(V^b_\mu,t_1)\circ \mathrm{Flow}(V^f_\mu,t_2)(*)\] in $T^2$ to $(t_1,t_2)\in S^1\times S^1=\R/\ell_b\Z\times\R/\ell_f\Z$. Define $\phi_\nu$ similarly. The composition $\phi:=\phi_\nu^{-1}\phi_\mu:T^2\to T^2$ is a diffeomorphism fixing $*$. By construction $(\phi_\mu)_*(\mu)=(\phi_\nu)_*(\nu)$ (because $B_\mu=B_\nu$), so $\phi_*(\mu)=\nu$. Furthermore, $\phi_*(V^b_\mu)=V^b_\nu$ and $\phi_*(V^f_\mu)=V^f_\nu$ implies $\phi$ acts trivially on $H_1(T^2)$, so $\phi$ is isotopic to the identity.

We proceed similarly when $\mathcal T$ is the pillow. Given metrics $\mu,\nu$ on $\mathcal T$, we obtain metrics $\mu',\nu'$ on $T^2$ via pullback along the hyperelliptic double cover $T^2\to\mathcal T$. We apply the previous argument -- with basepoint $*\in T^2$ a fixed point of the hyperelliptic involution $\tau$ -- to get $\phi'\in\Diff_0(T^2)$ such that $(\phi')_*(\mu')=\nu'$. By the construction, $\phi'$ commutes with the hyperelliptic involution (for the coordinate charts $\phi_{\mu'},\phi_{\nu'}:T^2\to S^1\times S^1$, we have $\phi_{\mu'}\tau\phi_{\mu'}^{-1}=\phi_{\nu'}\tau\phi_{\nu'}^{-1}$), so $\phi'$ descends to $\phi\in\Diff(\mathcal T)$ such that $\phi_*(\mu)=\nu$. By construction $\phi$ preserves the isotopy class of simple closed curves $\bar b,\bar f$ on $\mathcal T$ that lift to $b,f$, and $\phi$ does not permute the cone points (because $\phi'$ acts trivially on $H_1(T^2;\Z/2\Z)$, which can be identified with the 2-torsion points, the fixed points of $\tau$) and this implies that $\phi$ is isotopic to the identity by the Alexander method \cite[Prop 2.6, Prop 2.7]{farb-margalit}.

\paragraph{Claim 2.} $A(\mu)$ is the image of a section of $B:\Flat(\mathcal T)\to \GL_2^+(\R)/\SO(2)$. 

As above, consider the cover $T^2\to\mathcal T$ corresponding to the maximal abelian subgroup of $\pi_1(\mathcal T)$. Fix a basis $b,f$ for $H_1(T^2)$ and a basepoint $*\in T^2$. 

Write $\nabla$ for the Levi-Civita connection of $\mu'$, the pullback metric on the torus. Since the affine connection $\nabla$ is flat and has trivial linear holonomy, a $\nabla$-parallel metric $\nu$ is determined by its value at the basepoint $*$. Furthermore, the connection determines the geodesics, so if $b_\nabla,f_\nabla:[0,1]\to T^2$ are the geodesic representatives of $b,f$ based at $*$, then we have 
\[
\begin{array}{lcl}
B_\nu(f,f)&=&\mathrm{Length}_\nu(f_\nabla)^2=\nu(\dot{f_\nabla}(0),\dot{f_\nabla}(0))\\
B_\nu(b,b)&=&\mathrm{Length}_\nu(b_\nabla)^2=\nu(\dot{b_\nabla}(0),\dot{b_\nabla}(0))\\
B_\nu(b,f)&=&\nu(\dot{b_\nabla}(0),\dot{f_\nabla}(0))\\
\end{array}\]
This shows that $B_\nu$ is determined by the value of $\nu$ at the basepoint. Thus the restriction of $B$ to $A(\mu)$ is a bijection, which shows that $A(\mu)$ is the image of a section. This completes the proof of Claim 2 and the proof of the lemma. 
\end{proof}

\begin{rmk}[Averaging affinely equivalent metrics]\label{rmk:average}
A key property of affinely equivalent metrics is that they form a convex cone in the space of metrics (this is a completely general fact). For flat metrics on the torus or pillow, we use additionally that convex combinations of affinely equivalent flat metrics remain flat (e.g.\ since the flat metric is determined by its value at a basepoint). Consequently, the average of affinely equivalent flat metrics is another flat metric. 
\end{rmk}

\begin{rmk}[Normal form]\label{rmk:normal-form}
Given two inner products on $\R^2$, up to change of coordinates, we can assume the first is the identity and the second is diagonal (by the spectral theorem). 
\end{rmk}

\subsection{Geometric metrics on 2-orbifolds with given boundary lengths}\label{sec:boundary-lengths}

The following Lemma \ref{lem:boundary-lengths} is used in Propositions \ref{prop:leeb} and \ref{prop:totally-orientable-cover}. These both concern the problem of extending flat boundary-metrics to nonpositively curved metrics on Seifert fibered orbifolds. 

In the statement $\bar I$ denotes a mirrored interval, which is the 1-dimensional orbifold quotient of $S^1$ by a reflection. For a 2-orbifold $\cO$, we say that an assignment of lengths to the boundary components of $\cO$ is \emph{geometrically extendable} if there exists a constant-curvature metric on $\cO$ with totally geodesic boundary whose boundary curves have the prescribed lengths. 

\begin{lem}[Realizing boundary lengths] \label{lem:boundary-lengths}
Let $\cO$ be a compact $2$-orbifold with $\chi(\cO) \leq 0$. If $\cO$ is not $S^1 \times [0,1]$ or $\bar I \times [0,1]$, then any assignment of lengths to the components of $\partial\cO$ is geometrically extendable. If $\cO \cong  S^1 \times [0,1]$ or $\bar I \times [0,1]$, then the same conclusion holds if we assume additionally that the two boundary lengths are equal. 
\end{lem}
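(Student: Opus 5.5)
We split according to the sign of $\chi(\cO)$.

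\emph{Euclidean case, $\chi(\cO)=0$.} A compact $2$-orbifold with $\chi=0$ and nonempty boundary is either an annulus $S^1\times[0,1]$, a M\"obius band, a disk with two cone points of order $2$, or one of the reflector versions such as $\bar I\times[0,1]$. We treat these by inspection. For $S^1\times[0,1]$ and $\bar I\times[0,1]$, every flat metric with geodesic boundary is a product. Its two boundary components are parallel geodesics of equal length, so we need equal lengths, and any common length $\ell$ is realized by $\R/\ell\Z\times[0,1]$ (and its quotient by a reflection). The remaining flat orbifolds have a single boundary component. Each is a quotient of a flat annulus $\R/\ell\Z\times[-1,1]$ by an involution swapping the two boundary curves, so its boundary length is an arbitrary positive number.

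\emph{Hyperbolic case, $\chi(\cO)<0$.} Here we want a hyperbolic metric with geodesic boundary of prescribed lengths $\ell_1,\dots,\ell_k$. The plan is to cut $\cO$ into elementary pieces and glue.
\begin{enumerate}
\item Decompose $\cO$ along disjoint simple closed geodesics and mirrored arcs (the latter in the reflector case) into generalized pairs of pants. These are spheres or disks with three total boundary components and cone points, possibly with corner reflectors on mirror boundary.
\item Show that each piece admits a hyperbolic metric with geodesic boundary of arbitrary positive lengths. Each piece is the double of a hyperbolic right-angled hexagon, or of a polygon with some vertex angles $\pi/n$ (for cone points) or with some degenerate sides. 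Existence of such polygons for all choices of the free side lengths is classical hyperbolic trigonometry.
\item Glue the pieces along interior curves, choosing arbitrary equal lengths on the two sides of each cut, and assign the prescribed lengths $\ell_i$ to the curves of $\partial\cO$.
\end{enumerate}

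The simplest way to avoid a full case analysis of decompositions is to reduce to the manifold case, as follows.
\begin{enumerate}
\item Pass to a finite manifold cover $S\to\cO$. One exists because compact $2$-orbifolds with $\chi\le 0$ are good, indeed finitely covered by surfaces.
\item The deck group acts on $S$, and each boundary curve of $\cO$ lifts to boundary curves of $S$ of predictable length multiples, since each covers with some degree $d_j$.
\item Use Nielsen realization with boundary (Kerckhoff) in the form of the equivariant Teichm\"uller space. The fixed set of a finite group action on $\mathrm{Teich}(S)$ with boundary lengths allowed to vary is nonempty and is identified with $\mathrm{Teich}(\cO)$.
\item Show that the boundary length map from $\mathrm{Teich}(\cO)$ onto $\R_+^{k}$ is surjective.
\end{enumerate}
Surjectivity in the last step comes from Fenchel--Nielsen type coordinates on $\cO$ itself. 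Thus the decomposition in Step 1 of the hyperbolic case is still the essential input.

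The main obstacle is therefore the existence of the decomposition and the polygon realization for small orbifolds, for example a disk with one cone point and one corner reflector. These have no interior cutting curves, so the orbifold itself must be a single elementary piece. For these I would check directly, by writing down the corresponding hyperbolic polygon and verifying via the trigonometric formulas that the free side lengths range over all of $\R_+$.
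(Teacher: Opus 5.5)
Your $\chi(\cO)=0$ case is the paper's argument: product metrics on $S^1\times[0,1]$ and $\bar I\times[0,1]$, and quotients of a rescaled flat annulus otherwise. One small slip: the two one-boundary orbifolds whose boundary is $\bar I$ are quotients of the annulus by $\Z/2\times\Z/2$, not by a single involution. This is harmless, since a single boundary length can always be reached by rescaling. For $\chi(\cO)<0$ the paper proves nothing by hand. It cites the fact that the boundary-length map $\mathcal T(\cO)\to(\R^+)^n$ is a fibration (Choi, \S7.4.4), hence surjective. You instead try to reprove that fact, and there your argument has a genuine gap.

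Step~1 of your hyperbolic case asserts that $\cO$ decomposes, along simple closed curves and mirror-to-mirror arcs, into hyperbolic generalized pairs of pants. This is false in general, and not only for small orbifolds with no cutting curves.
\begin{itemize}
\item In $D^2(2,2,2)$ with one boundary circle ($\chi=-\tfrac12$), every essential non-peripheral simple closed curve cuts off a $D^2(2,2)$. That piece has $\chi=0$, so by Gauss--Bonnet it carries no hyperbolic metric with geodesic boundary.
\item In $\mathbb{RP}^2$ minus two disks ($\chi=-1$), every non-peripheral two-sided curve bounds a M\"obius band.
\item Mirrored versions behave the same way. Take the disk bounded by one boundary arc and one mirror arc carrying three order-$2$ corner reflectors: every essential arc cuts off the flat disk with two corner reflectors.
\end{itemize}
These cases matter here, because the lemma is applied to non-orientable base orbifolds in Proposition~\ref{prop:totally-orientable-cover}.

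To make your route work, you must either treat orbifolds like these as additional elementary pieces, or allow degenerate cuts. These are cuts along one-sided geodesics, along arcs joining two order-$2$ cone points, and along their mirrored analogues; this is what the Choi--Goldman decomposition into elementary orbifolds does. You would then still have to carry out the polygon computations you defer. Your covering alternative does not avoid any of this, since, as you note, its last step again needs Fenchel--Nielsen-type coordinates on $\cO$ itself. The economical repair is the paper's: quote the fibration theorem for the boundary-length map.
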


\begin{proof}
Let $n$ be the number of boundary components of $\cO$. 

When $\chi(\cO)<0$, the result follows from the fact that the function $\mathcal T(\cO)\to(\R^+)^n$ on the Teichm\"uller space of  $\cO$ that records the boundary lengths is a fibration  \cite[\S7.4.4]{choi}. 

If $\chi(\cO)=0$, then $\cO$ is one of the seven $2$-orbifolds in Figure \ref{fig:annular-orbifolds}; cf.\ \cite[\S1.4]{choi-goldman}. Each of these orbifolds is finitely covered by $S^1\times[0,1]$ and, apart from $S^1\times[0,1]$ and $\bar I\times[0,1]$, each has only one boundary component. Then we can construct a flat structure on $\cO$ with any given boundary length by taking a quotient of a flat annulus with the appropriate boundary length. If $\cO$ is $S^1 \times [0,1]$ or $\bar I \times [0,1]$, then flat metrics are product metrics, so the two boundary components have the same length. 
\end{proof}

\begin{figure}[h!]
\centering

    \def\svgwidth{0.7\columnwidth}
    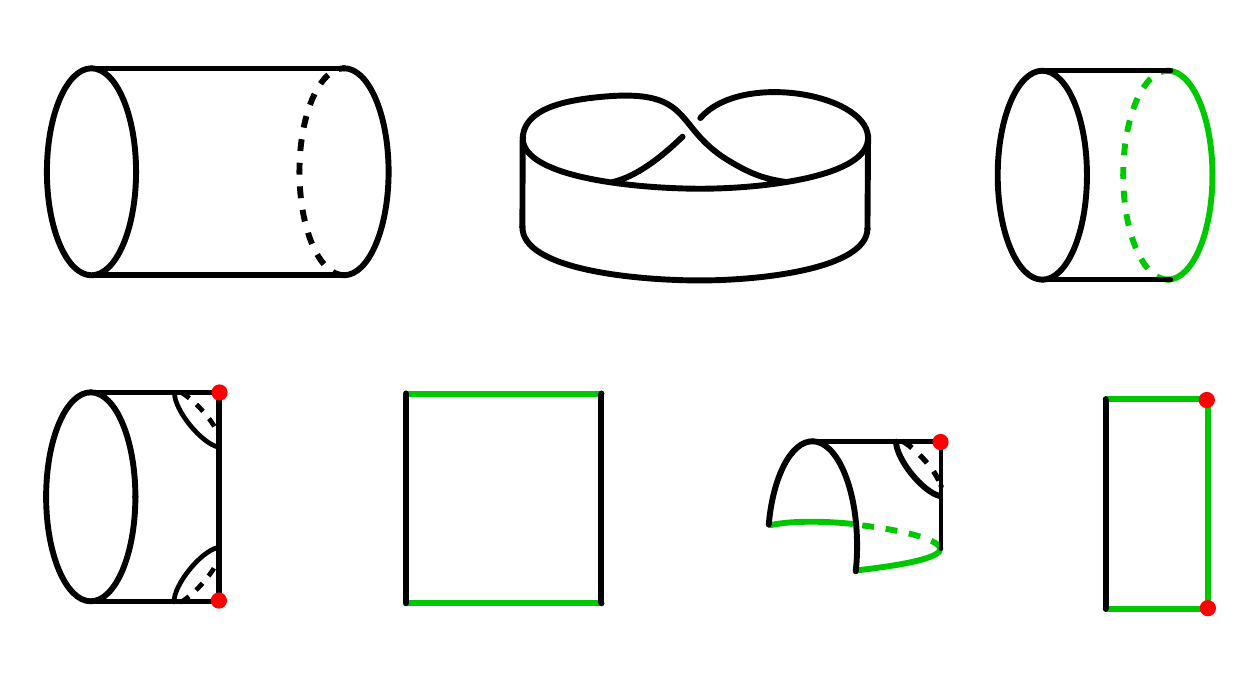

\caption{The seven compact  2-orbifolds  with $\partial\cO\neq\varnothing$ and $\chi(\cO)=0$. Green curves are reflector singularities, and red points are order-$2$ cone points or corner reflectors.}
\label{fig:annular-orbifolds}
\end{figure}

\section{Seifert fibered 3-orbifolds} \label{sec:seifert}

In this section we collect some facts about the structure of Seifert fibered orbifolds that will be used in later sections. 

Similar to a Seifert fibered manifold, a Seifert fibered orbifold $\cX$ has a decomposition into circles and mirrored intervals (called the fibers) such that each fiber has a saturated neighborhood of the form $(D^2\times S^1)/G$, where $G$ preserves each factor, acting orthogonally, and the fibers are the images of vertical fibers of $D^2\times S^1$; if $\cX$ has nonempty boundary, then on the boundary, the local model is $(D_+^2\times S^1)/G$, where $D_+^2$ is a half-disk; cf.\ \cite[pg.\ 201]{boileau-leeb-porti}. 

The decomposition of $\cX$ into fibers give a quotient $\cX\to\cO$, where $\cO$ is a 2-dimensional orbifold, which we call the base orbifold. 

\subsection{Pullbacks and totally-orientable covers}\label{sec:TO-cover}

First we observe that given an orientable Seifert orbifold $\pi:\cX\to\cO$ and a finite cover $q: \hat\cO \rightarrow \cO$ of the base orbifold, there exists a Seifert orbifold $\hat\pi: \hat\cX \rightarrow \hat\cO$ and a covering map $p: \hat\cX \rightarrow \cX$ such that $\pi\circ p=q\circ\hat\pi$. Furthermore, the preimage of every regular fiber $f$ of $\cX$ consists of $\deg(q)$ regular fibers, each mapped homeomorphically under $p$. Indeed, by \cite[Proposition 2.12]{BMP}, there is a short exact sequence 
\[1 \rightarrow C \rightarrow \pi_1(\cX) \rightarrow \pi_1(\cO) \rightarrow 1,\]
where $C$ is cyclic. The cover $q: \hat\cO \rightarrow \cO$ induces an inclusion of orbifold fundamental groups $\pi_1(\hat\cO) \hookrightarrow \pi_1(\cO)$. Define $\Gamma < \pi_1(\cX)$ as the pre-image of $\pi_1(\hat\cO)<\pi_1(\cO)$ under $\pi_1(\cX)\to\pi_1(\cO)$. 
%pullback in the following diagram.  
%    % https://q.uiver.app/#q=WzAsNCxbMCwwLCJcXEdhbW1hIl0sWzAsMSwiXFxwaV8xXntcXG9yYn0oXFxoYXRcXGNPKSJdLFsxLDEsIlxccGlfMV57XFxvcmJ9KFxcY08pIl0sWzEsMCwiXFxwaV8xXntcXG9yYn0oXFxjWCkiXSxbMSwyXSxbMywyXSxbMCwzXSxbMCwxXSxbMCwyLCIiLDEseyJzdHlsZSI6eyJuYW1lIjoiY29ybmVyIn19XV0=
%    \[\begin{tikzcd}
%        \Gamma & {\pi_1(\cX)} \\
%        {\pi_1(\hat\cO)} & {\pi_1(\cO)}
%        \arrow[from=1-1, to=1-2]
%        \arrow[from=1-1, to=2-1]
%        \arrow["\lrcorner"{anchor=center, pos=0.125}, draw=none, from=1-1, to=2-2]
%        \arrow[from=1-2, to=2-2]
%        \arrow[from=2-1, to=2-2]
%    \end{tikzcd}\]
Then $\hat\cX = \tilde\cX / \Gamma$ is the desired cover of $\mathcal X$, where $\tilde\cX$ is the universal orbifold cover of $\cX$, and $\Gamma<\pi_1(\mathcal X)$ acts on $\tilde\cX$ by deck transformations.

We apply the preceding observation to obtain finite covers of Seifert orbifolds that have orientable base. 

We say that $\cX$ is \emph{totally orientable} if $\cO$ is orientable (equivalently $\cO$ has only cone point singularities and the underlying surface $|\cO|$ is orientable). If $\cX$ is totally orientable, then the orbifold locus of $\cX$ is a union of fibers lying over a subset of the cone points of $\cO$. Every fiber of $\pi$ is homeomorphic to $S^1$, and all components of $\partial\cX$ are tori. The underlying topological space $|\cX|$ is a 3-manifold and admits a Seifert fibration induced by $\pi$.

Every compact orientable Seifert orbifold $\cX$ has a totally orientable double cover $\cX^+ \rightarrow \cX$, obtained by pullback of the orientation cover $\cO^+\to\cO$ of the base orbifold. 
For this cover, each torus in $\partial\cX$ is covered by two disjoint copies in $\partial \cX^+$, and each pillow in $\partial\cX$ is covered by its torus double cover.

\subsection{Relative Euler number of totally orientable Seifert orbifolds} \label{sec:relative-euler}

In this section, we define the relative Euler number of a totally-orientable Seifert orbifold $\cX$ following Luecke--Wu \cite{luecke-wu}. The definition depends on a choice of framing on $\partial\cX$, which is a union of oriented simple closed curves $\mf a\subset\partial\cX$, one for each boundary component, none of which is isotopic to a fiber. The pair $(\cX,\mf a)$ is called a framed Seifert orbifold. Since each component of $\partial\cX$ is a torus (because $\cX$ is totally orientable), we can encode $\mf a$ by a homology class of $\partial\cX$. 

The relative Euler number of a Seifert fibered \emph{manifold} $(X, \mf a)$ (cf.\ \cite{luecke-wu}) is defined as $e(X,\mf a):=e(X(\mf a))$ where $X(\mf a)$ is the Dehn filling and $e(-)$ is the (usual) Euler number for a closed Seifert fibered 3-manifold; see e.g.\ \cite{scott}. Generalizing this, when $(\cX,\mf a)$ is a totally orientable Seifert fibered 3-orbifold, we define $e(\cX,\mf a):=e(|\cX|,\mf a)$, where $|\cX|$ denotes the underlying Seifert fibered 3-manifold.

\section{Geometric extension of flat boundary-metrics} \label{sec:extend}

In this section $\cX$ is a compact 3-orbifold with boundary that is either atoroidal or Seifert fibered and admits a nonpositively curved metric. In terms of geometrization, $\cX$ is geometric, and admits one of the geometries $\HH^3$, $\EE^3$, or $\HH^2\times\R$. 

We address the following problem: given a flat metric on $\partial\cX$, does this metric extend to a nonpositively curved metric  on $\cX$ for which the boundary is totally geodesic? If so, we say the flat metric on $\partial\cX$ is \emph{geometrically extendable}. This is a 3-dimensional version of the problem for 2-orbifolds discussed in Section \ref{sec:boundary-lengths}. 

Our work here builds on work of Leeb \cite{leeb} in the manifold case. We also refer the reader to \cite{byron} which includes a more detailed account of Leeb's work. 

We discuss the cases when $\cX$ is atoroidal and Seifert fibered separately. 

\subsection{Atoroidal orbifolds} \label{sec:atoroidal-extend}

For atoroidal $\cX$, flat boundary-metrics are always geometrically extendable. 

\begin{prop} \label{prop:atoroidal}
Let $\mathcal X$ be a compact atoroidal $3$-orbifold. Any flat metric $\mu_{\partial}$ on $\partial \mathcal X$ extends geometrically to $\mathcal X$. Furthermore, one can choose the extending metric so that it has the form $\mu_{\partial} + dt^2$ on a collar neighborhood of $\partial \mathcal X$. 
\end{prop}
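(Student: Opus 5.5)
The plan is to follow Leeb's argument \cite[Prop.\ 2.3]{leeb}, transported to orbifolds, and to use Lemma \ref{lem:torus-pillow} and Lemma \ref{lem:turnover} to adjust the boundary metric.

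\emph{Step 1: a hyperbolic model with cusps.} Since $\cX$ is atoroidal, admits a nonpositively curved metric, and has nonempty boundary consisting of Euclidean 2-orbifolds, geometrization for orbifolds \cite{BMP} lets us identify the interior of $\cX$ with a finite-volume hyperbolic orbifold $\HH^3/\Gamma$. In this identification each component $\cT$ of $\partial\cX$ corresponds to a cusp, which is a quotient of a horoball by a parabolic subgroup $\Gamma_\cT\cong\pi_1(\cT)$. We truncate along horospherical cross-sections. The cusp region is then $\cT\times[0,\infty)$ with metric $e^{-2t}\mu_0+dt^2$, where $\mu_0$ is a flat metric on $\cT$ induced on the horosphere. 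Note that $\mu_0$ and every rescaling $e^{-2t}\mu_0$ are affinely equivalent, since they all have the same Levi-Civita connection. This is the point where Lemma \ref{lem:torus-pillow} enters.

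\emph{Step 2: interpolating to the prescribed boundary metric.} Fix a component $\cT$ and the target metric $\mu_\partial|_\cT$.
\begin{itemize}
\item If $\cT$ is a turnover, Lemma \ref{lem:turnover} shows that $\mu_\partial|_\cT$ is isotopic to $c\,\mu_0$ for some $c>0$.
\item If $\cT$ is a torus or pillow, Lemma \ref{lem:torus-pillow} gives a metric $\mu_1$ that is affinely equivalent to $\mu_0$ and isotopic to $\mu_\partial|_\cT$.
\end{itemize}
An isotopy can be absorbed into a collar, so it suffices to extend $\mu_1$. Following Leeb, on a collar $\cT\times[0,L]$ we take a warped metric $g_t+dt^2$, where $g_t$ is a path of metrics affinely equivalent to $\mu_0$. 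Concretely, we write $g_t=\mu_0(A(t)\cdot,A(t)\cdot)$ using a single parallel frame, which makes sense because the connection is flat with trivial linear holonomy on the abelian cover. By Remark \ref{rmk:normal-form} we may take $A(t)$ diagonal, $A(t)=\mathrm{diag}(e^{-a(t)},e^{-b(t)})$ in a $\mu_0$-orthonormal parallel frame. The metric is then locally the warped product $e^{-2a}dx^2+e^{-2b}dy^2+dt^2$, and its sectional curvatures are
\[-(a')^2+a'', \qquad -(b')^2+b'', \qquad -a'b'.\]
We choose $a,b$ with $a'=b'=1$ near the hyperbolic end (to match $e^{-2t}\mu_0$), with $a'=b'=0$ near $t=L$ (so the metric there is $\mu_1+dt^2$, giving a product collar), and with both functions nonincreasing in slope and having $a',b'\ge 0$. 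Taking $a'$ and $b'$ decreasing from $1$ to $0$ makes $a''\le 0$, $b''\le0$ and $a'b'\ge0$, so all three curvatures are $\le 0$. Moreover $\cT\times\{L\}$ is totally geodesic. The endpoint values $a(L),b(L)$ must equal the prescribed logarithms coming from $\mu_1$ relative to a rescaling of $\mu_0$. We arrange this by starting the interpolation deep enough in the cusp, which frees the total integrals $\int a'$ and $\int b'$ to be any sufficiently large numbers. The orbifold group $\pi_1(\cT)$ acts by isometries of $\mu_0$ preserving the connection. For the pillow, the $-1$ rotation preserves the diagonal form, so the construction descends to $\cT$.

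\emph{Main obstacle.} I expect the hard part to be keeping the interpolation curvature-nonpositive while hitting two independent endpoint targets. A monotone profile with $0\le a',b'\le1$ has one free parameter, the length, which controls both integrals. Decoupling them requires that $a'$ and $b'$ can decrease at different times. The mixed curvature $-a'b'$ is harmless because both slopes stay nonnegative. The remaining difficulty is checking orbifold-equivariance of the diagonalizing frame for pillows and turnovers. For turnovers no diagonalization is needed, since the rescaling is conformal, $a=b$.
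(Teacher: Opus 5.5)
Your overall scheme is the paper's. You use Lemma \ref{lem:turnover} or Lemma \ref{lem:torus-pillow} to make the cusp cross-section affinely equivalent to the target, absorb the isotopy in a collar, and then use a doubly warped collar whose warping functions are convex and decreasing. The paper splits your single collar into two steps: a shape-changing interpolation that keeps the $e^{-t}$ decay, followed by a conformal cusp-flattening with a convex $\rho$. Your curvature formulas are correct, and the equivariance checks for the pillow and turnovers are fine. The decoupling you flag as the main obstacle is not one, because $a$ and $b$ are chosen independently and each can realize any positive increment.

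\textbf{The gap is in matching the endpoints.} Nonpositive curvature in the $\partial_t\wedge\partial_x$ plane means $e^{-a}$ is convex, and a convex function that ends up constant is nonincreasing. So your collar can only \emph{shrink} the cross-section in each principal direction. Write $\mu_1=\alpha^2dx^2+\beta^2dy^2$ relative to $\mu_0=dx^2+dy^2$ at the level $t=0$, where $a=b=0$. You then need $a(L)=-\log\alpha>0$ and $b(L)=-\log\beta>0$, that is, $\alpha,\beta<1$. Nothing guarantees this. The metric $\mu_\partial$ is arbitrary and may be far larger than any horospherical cross-section, and the cusp neighborhoods cannot be enlarged past the maximal embedded cusp.

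Your remedy of starting the interpolation deeper in the cusp goes the wrong way. It replaces $\mu_0$ by $e^{-2t_0}\mu_0$, so the required increments become $-\log\alpha-t_0$ and $-\log\beta-t_0$, which are smaller and possibly negative. Being able to make $\int a'$ any sufficiently large number only helps when the targets are already large. Your phrase ``relative to a rescaling of $\mu_0$'' may be gesturing at the fix, but the mechanism you actually describe does not supply it.

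\textbf{The missing idea is a global rescaling, which is exactly how the paper proceeds.} Choose a single constant $c>0$, small enough for all the finitely many boundary components at once. Interpolate to $c^2\mu_1$ instead of $\mu_1$; the required increments $-\log(c\alpha)$ and $-\log(c\beta)$ are then positive. Finally multiply the whole metric by $c^{-2}$. A constant rescaling preserves nonpositive curvature and total geodesicity of the boundary. The collar $c^{-2}(c^2\mu_\partial+dt^2)$ becomes $\mu_\partial+ds^2$ with $s=t/c$, which also gives the ``furthermore'' clause. With this change your argument goes through.
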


For atoroidal 3-manifolds, Proposition \ref{prop:atoroidal} is proved by Leeb \cite[Prop.\ 2.3]{leeb}. Our proof of Proposition \ref{prop:atoroidal} is a modified version of Leeb's argument. First we recall that by geometrization  \cite{boileau-leeb-porti}, if $\cX$ is compact and atoroidal, then $\Int(\mathcal{X})$ admits a finite-volume complete hyperbolic metric. Given this, the idea is to interpolate between the complete finite-volume hyperbolic metric on $\Int(\mathcal{X})$ and the flat metric on $\partial X$ so that the resulting metric remains nonpositively curved. In Leeb's case, $\partial X$ is a union of tori and Leeb shows that one can interpolate between any two flat metrics on $T^2$ over a cylindrical region $T^2 \times I$ so that the resulting metric on $T^2 \times I$ is nonpositively curved. We extend this to the remaining closed orientable Euclidean 2-orbifolds.

\begin{proof}[Proof of Proposition \ref{prop:atoroidal}]
    Let $\nu$ be the complete finite-volume hyperbolic metric of constant curvature $-1$ on $\Int(\mathcal{X})$. 
    
For each component $\mathcal{T}$ of $\partial \mathcal{X}$, choose a regular neighborhood $\mathcal N$ of $\mathcal T$ in $\cX$ so that the hyperbolic metric $\nu$ on $\mathcal N\cap\mathrm{int}(\mathcal X)$ has the form $e^{-2t} \nu_{\mathcal{T}}+dt^2$, where $\nu_{\mathcal T}$ is a flat metric on $\mathcal T$, and this expression is with respect to some coordinates $\mathcal{N}\cong\mathcal T\times[r,\infty]$, where $\mathcal T\times\{\infty\}$ is identified with the boundary component $\mathcal T$.

To prove the proposition, it suffices to find a constant $c=c(\cX)$ and to find, for each component $\mathcal T$ of $\partial\cX$, a nonpositively curved metric on $\mathcal T\times[r,\infty]$ that smoothly interpolates between $e^{-2t}\nu_{\mathcal T}+dt^2$ near $\mathcal T\times\{r\}$ and $c^2\mu_{\mathcal T}+dt^2$ near $\mathcal T\times\{\infty\}$, where $\mu_{\cT}$ is the restriction of $\mu_\partial$ to $\cT$. Indeed, these metrics assemble to give a nonpositively curved metric $\mu$ on $\cX$ and $\mu/c^2$ is the desired nonpositively curved metric that agrees with $\mu_\partial$ on $\partial\cX$.

\paragraph{\boldmath Cusp interpolation between flat metrics.}

As above, fix $\mathcal T$ with the hyperbolic metric $\nu=e^{-2t}\nu_{\mathcal T}+dt^2$ on $\mathcal T\times[r,\infty)$. First we modify $\nu$ to equal 
$e^{-2t}\mu_{\mathcal T}+dt^2$ near $t=\infty$ without changing the metric near $t=r$.

By Lemmas \ref{lem:turnover} and \ref{lem:torus-pillow}, there exists $\phi \in \Diff_0(\mathcal{T})$ such that  $\phi^* \nu_{\mathcal{T}}$ and $\mu_{\mathcal{T}}$ are affinely equivalent. In the turnover case this means that $\phi^* \nu_{\mathcal{T}} = \alpha^2\>\mu_{\mathcal{T}}$ for some constant $\alpha>0$. We use a smooth isotopy from $\Id_{\mathcal{T}}$ to $\phi$ to construct a level-preserving diffeomorphism $\Phi$ of $\mathcal T\times[r,\infty)$ that is the identity near $\mathcal T\times\{r\}$ and is $\phi$ near $\mathcal T\times\{\infty\}$. The metric $\Phi^*\nu$ on $\mathcal T\times[r,\infty)$ equals $\nu$ near $\mathcal T\times\{r\}$ and equals $e^{-2t}\phi^*\nu_{\mathcal T}+dt^2$ on $\mathcal T\times[R,\infty)$ for some $R>r$. Pulling back to the universal cover $p:\R^2\times[R,\infty)\to \mathcal T\times[R,\infty)$ and using Remark \ref{rmk:normal-form}, our constructed metric $p^*(\Phi^*\nu)$ is equal to $e^{-2t}(dx^2+dy^2)+dt^2$, while our target metric $p^*(e^{-2t}\mu_{\mathcal T}+dt^2)$ has the form $e^{-2t}(a^2\>dx^2+b^2\>dy^2)+dt^2$ for some constants $a,b>0$. In the turnover case $a=b$. To interpolate between these,  let $\psi:[R,\infty)\to[0,1]$ be a smooth function that equals 0 near $R$ and $1$ near $\infty$ and consider the modified metric on $\R^2\times[R,\infty)$ defined by 
\begin{equation}\label{eqn:warp1}e^{-2t}((1 - \psi(t))+\psi(t)a)^2 \cdot dx^2 + e^{-2t} ( (1 - \psi(t))+\psi(t)b)^2 \cdot dy^2+dt^2. \end{equation}
This metric is equal to our constructed metric near $\R^2\times\{R\}$ and is equal to (the pullback of) our desired metric near $\R^2\times\{\infty\}$. If we arrange (as we can) for the functions $e^{-t}((1 - \psi(t))+\psi(t)a)$ and $e^{-t} ( (1 - \psi(t))+\psi(t)b)$ to be strictly monotonically decreasing and convex, then this metric is nonpositively curved by a straightforward computation (cf.\ e.g.\ \cite[pg.\ 26--27]{bishop-oneill}).

To descend to $\mathcal T\times[R,\infty)$, note that the metrics in (\ref{eqn:warp1}) are invariant under the deck group action $\pi_1(\mathcal T)\curvearrowright\R^2$. Indeed, in the torus and pillow cases, $dx^2$ and $dy^2$ are invariant; in the turnover case $dx^2+dy^2$ is invariant and we use that $a=b$. Thus the metric (\ref{eqn:warp1}) descends to a metric on $\mathcal T\times[R,\infty)$.  The concatenation of this metric with $\Phi^*\nu$ on $\mathcal T\times[r,R]$ is a nonpositively curved metric that equals $\nu=e^{-2t}\nu_{\mathcal T}+dt^2$ near $\mathcal T\times\{r\}$ and equals $e^{-2t}\mu_{\mathcal T}+dt^2$ on $\mathcal T\times[q,\infty)$ for some $q>r$. 

\paragraph{Cusp flattening.} 
Here we observe that given the nonpositively curved metric $e^{-2t}\mu_{\mathcal T}+dt^2$ on $\mathcal T\times[q,\infty)$, for $c>0$ sufficiently small, we can find a nonpositively curved metric on $\mathcal T\times[q,\infty)$ that equals the given metric near $t=q$ and is equal to $c^2\mu_{\mathcal T}+dt^2$ near $t=\infty$. For this, fix $c<e^{-q}$ sufficiently small so that there is a non-increasing convex function $\rho: [q, \infty) \rightarrow \R^+$ such that $\rho(t) = e^{-t}$ near $q$ and $\rho(t) \equiv c$ near $\infty$, and consider the metric $\rho^2(t) \mu_{\mathcal{T}}+dt^2$. The assumption that $\rho$ is non-increasing and convex guarantees this metric is nonpositively curved \cite[Rmk.\ 7.9]{bishop-oneill}. 

Concatenating the metric $\rho^2(t) \mu_{\mathcal{T}}+dt^2$ on $\mathcal T\times[q,\infty)$ with the metric constructed on $\mathcal T\times[r,q]$ in the previous step, taking $c$ small enough to work for each component $\mathcal T$ of $\partial\cX$, and truncating the ends (so as to get a metric on a compact orbifold diffeomorphic to $\cX$) finishes the proof.
\end{proof}

\subsection{Seifert fibered orbifolds} \label{sec:seifert-extend}

We continue to study the problem of extending  flat boundary-metrics geometrically. For Seifert fibered orbifolds, unlike atoroidal orbifolds, geometric extensions do not always exist. Our analysis in the Seifert fibered case hinges on totally-orientable Seifert orbifolds. For these we have a homological criterion (Proposition \ref{prop:leeb}) generalizing Leeb \cite{leeb} in the manifold case. Every Seifert fibered orbifold has a totally-orientable double cover (\S\ref{sec:TO-cover}) and we show that the metric extension problem can be reduced to this cover (Proposition \ref{prop:totally-orientable-cover}) or to any further cover (Proposition \ref{prop:cover-totally-orientable}).  

\subsubsection{Homological criterion for extending geometrically} \label{sec:leeb-condition}

In this section $\cX$ is a totally-orientable Seifert fibered orbifold with nonempty boundary. We also assume that $\cX$ admits a nonpositively curved metric; equivalently $\cX$ is modeled by either $\EE^3$ or $\HH^2 \times \RR$. 

Proposition \ref{prop:leeb} gives a (co)homological criterion for when a flat boundary-metric on $\cX$ is geometrically extendable when $\cX$ is totally orientable. It reformulates and extends a result of Leeb, as explained in Remark \ref{rmk:leeb}. 

To state Proposition \ref{prop:leeb}, recall from Section \ref{sec:torus-pillow} (cf.\ (\ref{eqn:inner-product})) that a flat metric $\nu$ on $T^2$ determines an inner product $B_\nu$ on $H_1(T^2)\otimes\R$. If $T^2$ is a boundary component of a Seifert orbifold, then the fiber class $f$ (well-defined up to $\pm1$) gives a function 
\[\begin{array}{rcl}
\beta_\nu:H_1(T^2;\R)&\to&\R\\
x&\mapsto&B_\nu(x,f),
\end{array}\]
which we view as an element of $H^1(T^2;\R)$. Accordingly, if $\cX$ is a totally-orientable Seifert fibered orbifold, then a flat boundary metric $\mu_\partial$ on $\cX$, together with a choice of regular fiber of $\cX$, determines a class $\beta_{\mu_\partial}\in H^1(\partial\cX;\R)$. 

In the following statement the cohomology of an orbifold $\cX$ is the (singular) (co)homology of its underlying topological space $X=|\cX|$. 

\begin{prop}[Homological criterion] \label{prop:leeb}
Let $\cX$ be a totally-orientable Seifert fibered $3$-orbifold, and assume $\cX$ is nonpositively curved. Fix a flat metric $\mu_{\partial}$ on $\partial\cX$, and write $\beta_{\mu_\partial}\in H^1(\partial\cX;\R)$ for the associated class defined above. If $\cX\neq T^2\times[0,1]$, then $\mu_{\partial}$ is geometrically extendable if and only if $\beta_{\mu_\partial}$ is in the image of the restriction map 
\[H^1(\cX;\R)\to H^1(\partial\cX;\R).\] 
If $\cX=T^2\times[0,1]$ and the boundary inner products are $B_0,B_1$, then $\mu_\partial$ is geometrically extendable if and only if $B_0=B_1$. Furthermore, in every case, the extending metric may be chosen to be flat near $\partial\cX$.
\end{prop}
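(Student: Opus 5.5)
The plan is to follow Leeb's manifold argument, working $\pi_1(\cX)$-equivariantly on the universal orbifold cover, and to translate his numerical conditions into the cohomological statement using the long exact sequence of the pair. Since we work over $\R$, the image of $H^1(\cX;\R)\to H^1(\partial\cX;\R)$ is exactly the annihilator of $K:=\ker\big(H_1(\partial\cX;\R)\to H_1(\cX;\R)\big)$. So the first step is purely topological: compute $K$ for $X=|\cX|$, a Seifert fibered manifold whose base is the orientable surface $|\cO|$ with $n\ge1$ boundary circles. Choose a framing $\mf a=(s_1,\dots,s_n)$ of $\partial\cX$. I expect $K$ to be spanned by two kinds of classes:
\begin{itemize}
\item the differences $f_i-f_j$ of fiber classes on different boundary tori, since all regular fibers are homologous over $\R$ (exceptional fibers only contribute torsion);
\item the single class $\sum_i s_i + e(\cX,\mf a)\, f$, which comes from a horizontal section over the complement of the exceptional fibers, in the notation of \S\ref{sec:relative-euler}.
\end{itemize}
A dimension count from the exact sequence shows $\dim K=n$, using $H_1(X;\R)\cong H_1(|\cO|;\R)\oplus\R f$ when $\partial X\ne\varnothing$ and $\cX\neq T^2\times[0,1]$. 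Hence $\beta_{\mu_\partial}\in\mathrm{im}$ if and only if two conditions hold: (a) $B_i(f,f)$ is independent of the boundary component $i$, i.e.\ all boundary fibers have the same length $L$; and (b) $\sum_i B_i(s_i,f)+e(\cX,\mf a)L^2=0$. Changing the framing changes both terms of (b) compatibly, so (b) is framing-independent, as it must be.

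For sufficiency, assume (a) and (b). I would build the metric as $g=g_{\cO}+(L\,d\theta+A)^2$, where $g_{\cO}$ is a metric on the base and $A$ is a connection on the (orbifold) circle bundle.
\begin{itemize}
\item For $g_{\cO}$, use Lemma \ref{lem:boundary-lengths} to get a constant curvature $\le0$ metric on $\cO$ with geodesic boundary whose boundary lengths are the lengths of the $\mu$-orthogonal projections of the boundary tori onto the horizontal direction, namely $\sqrt{\det B_i}/L$. These lengths are arbitrary, and $\cO$ is not an annulus because $\cX\neq T^2\times[0,1]$ and $\cO$ is orientable.
\item For $A$, take a \emph{flat} $\R$-connection. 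Its holonomy around the $i$-th boundary curve is prescribed by the shear $B_i(s_i,f)/L$, which is what makes the induced boundary metric equal $\mu_\partial$. Its holonomy around the cone points is prescribed by the local models $(D^2\times S^1)/\Z_m$. A flat connection with these holonomies exists exactly when the holonomies satisfy the relation in $\pi_1$ of the base, and after taking real lifts that relation is the Euler condition (b).
\end{itemize}
The resulting metric is locally isometric to $\HH^2\times\R$ or $\EE^3$, hence nonpositively curved. It is a local product, so it is flat near the boundary provided $g_{\cO}$ is first modified to be flat in a collar of $\partial\cO$. This modification is a warped cusp-to-cylinder interpolation of the kind used in Proposition \ref{prop:atoroidal}, and it keeps curvature $\le0$.

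For necessity, start from a nonpositively curved extension with totally geodesic flat boundary, and double $\cX$ along $\partial\cX$ or pass to the interior of its universal cover. The fiber class generates an infinite cyclic normal subgroup, hence a central one after passing to a finite-index subgroup. By the splitting and flat torus theorems in the form Leeb uses, the universal cover splits isometrically as $Y\times\R$, with the fiber group acting by translations of a fixed length $L$ and the projection to $Y$ giving a horizontal foliation. This forces (a). The horizontal foliation then defines a flat connection whose boundary holonomies are the shears $B_i(s_i,f)/L$, and the relation in the base group forces (b). In the case $\cX=T^2\times[0,1]$, $\pi_1=\Z^2$ acts by translations on a flat $\R^2$-factor of the universal cover, so both boundary tori are isometric to the same quotient $\R^2/\Z^2$ with the same marking; this gives $B_0=B_1$. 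The converse is the product metric $\mu_0+dt^2$.

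The main obstacle is making the necessity argument and the holonomy bookkeeping rigorous in the orbifold setting: identifying (b) with the relative Euler number of $|\cX|$ when cone fibers are present, and checking that the flat-connection construction respects the local models near singular fibers. My first attempt would be to pass to a finite manifold cover $X\to\cX$ that is a Seifert fibration with orientable base (which exists because $\cX$ is good). There I would apply Leeb's manifold result directly, note that both conditions (a) and (b) scale by the degree under pullback, and use that the metric constructed on $X$ can be averaged to be deck-invariant, because it is determined by $\mu_\partial$ and by choices made on the base.
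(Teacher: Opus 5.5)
Your proposal is correct, and its core is the paper's argument. Necessity comes from Leeb's splitting of the universal cover as $Y\times\R$. Sufficiency comes from pairing a geometric structure on the base (Lemma \ref{lem:boundary-lengths}, with boundary lengths $\sqrt{\det B_i}/L$) with an $\R$-valued holonomy in the fiber direction. Your flat connection $A$ is another description of the paper's homomorphism $\rho_\R:\pi_1(\cX)\to\Isom^+(\R)\cong\R$.

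The difference is packaging. You first compute $K=\ker\big(H_1(\partial\cX;\R)\to H_1(\cX;\R)\big)$ to turn the cohomological condition into Leeb's numerical conditions (a), (b), and then verify those by holonomy bookkeeping around boundary curves and cone fibers. The paper skips both steps. Since $\Isom^+(\R)$ is abelian, $\rho_\R$ factors through $\pi_1(\cX)^{\mathrm{ab}}\otimes\R\cong\pi_1(|\cX|)^{\mathrm{ab}}\otimes\R\cong H_1(\cX;\R)$. So necessity is ``restrict $|f|\,\rho_\R$ to $\partial\cX$,'' and sufficiency is ``extend $\beta_{\mu_\partial}$ to $H_1(\cX;\R)$, divide by $|f|$, and precompose with abelianization.'' What your route buys is the explicit form of Leeb's conditions, which the paper derives separately in Remark \ref{rmk:leeb}.

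The orbifold issue you single out as the main obstacle is absorbed into that one isomorphism. The relations $c_j^{p_j}=f^{q_j}$ and $c_j^{p_j'}=f^{q_j'}$ coincide over $\R$, i.e.\ $q_j/p_j=q_j'/p_j'$. No separate check at singular fibers is needed, because the metric is defined by a representation of the orbifold fundamental group. So your fallback through a manifold cover is unnecessary. Also, if ``averaged'' there means averaging Riemannian metrics, it would fail, since averaging does not preserve nonpositive curvature; only the abelian connection data can be averaged.

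One small fix: in the $T^2\times[0,1]$ converse, $B_0=B_1$ only makes the two boundary metrics isotopic (Lemma \ref{lem:torus-pillow}). So $\mu_0+dt^2$ must be pulled back by a level-preserving diffeomorphism built from that isotopy, as in the paper.
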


\begin{proof}[Proof of Proposition \ref{prop:leeb}]
    The proof is ultimately similar to the argument of \cite[Lem.\ 2.5]{leeb} (see also \cite[Prop.\ 2.8]{byron}), even though our formulation in terms of cohomology is different. 

\paragraph{Forward direction.} Assume $\mu_\partial$ extends to $\mu$ as in the statement. As explained by Leeb \cite[\S2.3]{leeb}, the universal cover of $(\cX,\mu)$ splits as a Riemannian product $Y\times\R$ in such a way that the vertical copies of $\R$ project to fibers of the given Seifert fibration. 
(The $\R$-factor corresponds to the central subgroup generated by a regular fiber.) 
The action of the deck group $\pi_1(\cX)$ on $Y\times\R$ induces a homomorphism $\rho_\R:\pi_1(\cX)\to\Isom(\R)$ whose image is contained in $\Isom^+(\R)$ because $\cX$ is totally orientable. Since $\Isom^+(\R)\cong\R$ is abelian, $\rho_\R$ factors through a homomorphism 
\[\widehat\beta:H_1(\cX;\R)\to\R.\]
To obtain $\widehat\beta$, we use the isomorphisms
\[\pi_1(\cX)^{\mathrm{ab}}\otimes\R\cong\pi_1(X)^{\mathrm{ab}}\otimes\R\cong H_1(X;\R)= H_1(\cX;\R),\] where $X=|\cX|$ is the underlying topological space (manifold) of $\cX$. The first isomorphism can be seen from the standard presentations of $\pi_1(\cX)$ and $\pi_1(X)$. Specifically, $\pi_1(\cX)$ has a presentation $\pi_1(\cX) = \langle S \mid R \rangle$, where  
    \begin{equation*}
        S=
        \left\{
            b_1,\ldots,b_n,
            c_1,\ldots,c_m,
            x_1,y_1,\ldots,x_g,y_g,
            f
        \right\}
    \end{equation*}
    and
    \begin{equation*}
        R=
        \left\{
            \text{$f$ central}, \, 
            c_j^{p_j} = f^{q_j}, \, 
            \prod_{i=1}^n b_i
            \prod_{j=1}^m c_j
            \prod_{k=1}^g [x_k, y_k]
            = f^{e_0}
        \right\}.
    \end{equation*}
Here $g$ is the genus of the base orbifold, $f$ corresponds to a regular fiber, and the $b_i$ and $c_j$ correspond to boundary components and cone points in the base orbifold. Unlike the manifold case, $p_j,q_j$ need not be relatively prime because $\cX$ may have nontrivial isotropy along its singular fibers. If we set $d_j = \gcd(p_j,q_j)$, $p_j' = p_j / d_j$, and $q_j' = q_j / d_j$, then $c_j^{p_j'} f^{-q_j'}$ is the meridian of the underlying solid-torus neighborhood of the $j$-th singular fiber. In the orbifold fundamental group, this element has order $d_j$, corresponding to the local isotropy group $\ZZ/d_j\ZZ$. 
For the underlying topological space $X = |\cX|$, the loops $c_j^{p_j'} f^{-q_j'}$ are trivial, and $\pi_1(X)$ has a similar presentation with relations $c_j^{p_j} = f^{q_j}$ replaced by $c_j^{p_j'} = f^{q_j'}$. For more information, see e.g.\ \cite{scott}. 

The restriction of $\rho_\R:H_1(X)\to\R$ to $H_1(\partial X)$ is given by $x\mapsto B(x,f)/|f|$, where $|f|=B(f,f)^{1/2}$ and $B$ is the inner product associated to $\mu_\partial$. This proves that $\beta_{\mu_\partial}\in H^1(\partial X)$ is the image of $|f|\cdot \rho_\R$ under $H^1(\cX;\R)\cong H^1(X;\R)\to H^1(\partial X;\R)\cong H^1(\partial\cX;\R)$.

\paragraph{Reverse direction.} 
We construct a discrete faithful representation 
\[\rho=\rho_{Y}\times\rho_{\R}: \pi_1(\cX) \to \Isom^+(Y)\times\Isom^+(\RR)\] that will give $\cX$ a nonpositively curved metric that geometrically extends $\mu_\partial$. Here we take $Y = \EE^2$ or $\mathbb H^2$ depending on whether $\chi(\cO) $ is zero or negative. 

We define $\rho_{Y}$ by factoring through a geometric representation $\pi_1(\cO)\to\Isom(Y)$ of the base orbifold $\cO$. The values of the translation lengths of $\rho_{Y}$ on the boundary curves $b_i$ are constrained by the given flat metric on $T_i$. Specifically, the translation length of $\rho_{Y}(b_i)$ equals $\big[ B_i(b_i,b_i) - B_i(b_i,f)^2 / |f|^2 \big]^{1/2}$ by the Pythagorean theorem. The existence of $\rho_{Y}$ follows from Lemma \ref{lem:boundary-lengths} and the fact that $\cO$ is orientable and not orbifold-diffeomorphic to $S^1 \times [0,1]$ (because $\cX\neq T^2\times[0,1]$). 

To define $\rho_{\RR}$, we use that $\beta_{\mu_\partial}:H_1(\partial\cX;\R)\to\R$ extends to a homomorphism $\beta:H_1(\cX;\R)\to\R$, and we define $\rho_\R$ as the composition 
\[
\pi_1(\cX)\to\pi_1(\cX)^{\mathrm{ab}}\otimes\R\cong\pi_1(X)^{\mathrm{ab}}\otimes\R\cong H_1(X;\R)=H_1(\cX;\R)\xrightarrow{\beta/|f|}\R.\] By construction $\rho_{\RR}(f) = B(f,f)^{1/2}$ and $\rho_{\RR}(b_i) = B(b_i, f)/|f|$, so the metric defined by $\rho=\rho_Y\times\rho_\R$ defines a geometric extension of $\mu_\partial$.

\paragraph{\boldmath $T^2\times[0,1]$.} 
If $\mu_\partial$ geometrically extends to $\mu$, then $\mu$ is metrically a product. Indeed, in the universal cover the boundary components are convex and the distance function $d_{\tilde\mu}(x,\tilde T_1)$ on $\tilde T_0$ is uniformly bounded (because $T^2$ is compact) and convex (true generally for CAT(0) geometry \cite[Ch.\ II.2, Cor.\ 2.5]{bridson-haefliger}), and this implies the universal cover is metrically a product \cite[Ch.\ II.2, Exercise 2.12(2)]{bridson-haefliger}. Since the deck group acts isometrically, this implies $\mu$ is a product metric. Thus $B_0=B_1$. 

Conversely, if $B_0=B_1$, then the boundary metrics are isotopic, i.e.\ there exists $\phi\in\Diff_0(T^2)$ such that $\phi^*(\mu_\partial\mid_{T^2\times\{0\}})=\mu_\partial\mid_{T^2\times\{1\}}$. Choose a smooth isotopy $\phi_t$ such that $\phi_t=\id$ near $0$ and $\phi_t=\phi$ near $1$. Define a diffeomorphism $\Phi:T^2\times[0,1]\to T^2\times[0,1]$ by $(x,t)\mapsto (\phi_t(x),t)$. Then the metric $\Phi^*(\mu_\partial\mid_{T^2\times\{0\}}\times dt^2)$ has the desired form.

Finally, the ``furthermore" statement of the proposition is immediate if $\cX$ has $\mathbb E^3$ geometry. If $\cX$ has $\HH^2\times\R$ geometry, then the statement can be proved in a similar way to the cusp-flattening step in the proof of Proposition \ref{prop:atoroidal} (flattening the cusp in the $\HH^2$ direction). 
\end{proof}

\begin{rmk}\label{rmk:leeb}
The condition that $\beta_{\mu_\partial}:H_1(\partial\cX;\R)\to\R$ extends to $H_1(\cX;\R)\to\R$ can be expressed in coordinates as follows. From the presentation of $\pi_1(\cX)$ in the proof of Proposition \ref{prop:leeb}, we see that $H_1(\cX;\R)$ has a presentation (as a vector space) with generating set $b_1,\ldots,b_n,f_1,\ldots,f_n, x_1,y_1,\ldots,x_g,y_g$ and relations $\sum b_i= e(\cX,\kb) f_1$ and $f_i=f_1$ for each $1\le i\le n$. Here $\kb$ is the framing of $\partial\cX$ determined by the $b_i$, and $f_i$ is a regular fiber on the $i$-th component of $\partial\cX$. By the definition of the relative Euler number (see Section \ref{sec:relative-euler}), we have 
\[e(\cX, \kb)= e(X, \kb)= e_0 - \sum_{j=1}^m \frac{q_j'}{p_j'}= e_0 - \sum_{j=1}^m \frac{q_j}{p_j}.\]
Given this presentation, we see that $\beta_{\mu_\partial}:H_1(\partial\cX;\R)\to\R$ extends to $H_1(\cX;\R)\to\R$ if and only if 
\begin{enumerate}[(1)]
\item (fiber length) $B(f_i,f_i)=B(f_1,f_1)$ for each $i$, and 
\item (relative Euler number) $\sum_i B(b_i,f_i)=e(\cX,\kb)B(f_1,f_1)$. 
\end{enumerate} 
This aligns with the form of the homological criterion that appears in \cite{leeb, buyalo-svetlov, byron}. We refer to conditions (1) and (2) as \emph{Leeb's conditions}. 
\end{rmk}

\begin{rmk}\label{rmk:convex-combination}
The preceding equations (1) and (2) are a system of linear equations on the inner product $B$. Since the set of inner products on a real vector space forms a convex cone, so too does the set of inner products satisfying Leeb's conditions. 
\end{rmk}

\subsubsection{Reduction to totally-orientable double covers} 

The following proposition reduces the problem of determining whether a flat metric on $\partial\cX$ extends geometrically to the same problem on the totally-orientable double cover. 

\begin{prop}\label{prop:totally-orientable-cover}
Let $\mathcal X$ be a nonpositively curved Seifert fibered $3$-orbifold not diffeomorphic to $\cT \times [0,1]$ where $\cT$ is a torus or a pillow. Let  $p:\mathcal X^+\to\mathcal X$ be the totally orientable double cover. Then a flat metric $\mu_\partial$ on $\partial\mathcal X$ extends geometrically to $\mathcal X$ if and only if $p^*(\mu_\partial)$ extends geometrically  to $\mathcal X^+$. 
\end{prop}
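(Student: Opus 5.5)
The forward direction is immediate: if $\mu_\partial$ extends to a nonpositively curved metric $\mu$ on $\cX$ with totally geodesic boundary, then $p^*\mu$ is a nonpositively curved metric on $\cX^+$ with totally geodesic boundary restricting to $p^*\mu_\partial$. The real content is the converse. Here the obstacle is that an extension on $\cX^+$ need not be invariant under the deck involution $\tau$ of $\cX^+\to\cX$. Averaging does not help, because a convex combination of nonpositively curved metrics need not be nonpositively curved. I will therefore not average metrics. Instead I will work at the level of the data in Proposition \ref{prop:leeb} and build a $\tau$-equivariant (equivalently, $\pi_1(\cX)$-equivariant) representation directly.

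\textbf{Step 1: translate the hypothesis into Leeb's conditions.} Since $\cX\neq\cT\times[0,1]$, the cover $\cX^+$ is not $T^2\times[0,1]$. So by Proposition \ref{prop:leeb}, extendability of $p^*\mu_\partial$ is equivalent to Leeb's conditions (Remark \ref{rmk:leeb}) for the inner products $B^+$ on the components of $\partial\cX^+$. Condition (1) says all fibers of $\partial\cX^+$ have a common length $\ell$. Condition (2) says $\sum_i B^+(b_i,f_i)=e(\cX^+,\kb)\,\ell^2$.

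\textbf{Step 2: construct the extension on $\cX$.} Write $\pi_1(\cX^+)<\pi_1(\cX)$ with index $2$, and let $\cO^+\to\cO$ be the base orientation cover. The universal cover will again be $Y\times\R$, with $Y=\EE^2$ or $\HH^2$. I need a representation
\[\rho=\rho_Y\times\rho_\R:\pi_1(\cX)\to\Isom(Y)\times\Isom(\R).\]
Elements of $\pi_1(\cX)\setminus\pi_1(\cX^+)$ reverse orientation on $\cO$. Because $\cX$ is orientable, these elements also flip the fiber, so they must act on $\R$ by reflections.

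For $\rho_Y$, apply Lemma \ref{lem:boundary-lengths} to $\cO$ itself, which is possibly nonorientable with mirrors. The boundary lengths to prescribe are the horizontal lengths $\big[B(b,b)-B(b,f)^2/|f|^2\big]^{1/2}$ computed on $\partial\cX$. For a pillow boundary component, use the corresponding quantity on its torus cover, halved appropriately for the mirrored-interval boundary $\bar I$. The hypothesis that $\cX$ is not $\cT\times[0,1]$ excludes the annulus and $\bar I\times[0,1]$ bases, so the lemma applies with no equality constraint.

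For $\rho_\R$, start from $\rho_\R^+:\pi_1(\cX^+)\to\R$, which Step 1 supplies. I must extend it to $\pi_1(\cX)$ so that orientation-reversing elements act by reflections. Since $\tau$ preserves $p^*\mu_\partial$, the class $\beta_{p^*\mu_\partial}$ is $\tau$-anti-invariant: $\tau$ flips $f$ and preserves $B^+$. Replacing $\rho^+_\R$ by its anti-invariant part $\tfrac12(\rho^+_\R-\tau^*\rho^+_\R)$ keeps the same boundary restriction, so the extension property still holds. An anti-invariant homomorphism to $\R$ is exactly what is needed for the reflections to be consistently defined on the coset, up to choosing the reflection centers, which amounts to solving a cocycle condition.

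\textbf{Step 3: check the product representation.} Finally, verify that $\rho$ is discrete, faithful and cocompact-with-boundary as in Proposition \ref{prop:leeb}, and that it induces exactly $\mu_\partial$ on each boundary component. This includes the pillow components, where the isotropy elements act as rotations by $\pi$ about a point of $Y$ composed with a reflection of $\R$. The extending metric can be taken flat near the boundary by the same cusp-flattening argument.

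\textbf{Expected main obstacle.} The hardest step should be Step 2, specifically extending the anti-invariant $\rho^+_\R$ to reflections on the nontrivial coset while keeping consistency with the chosen $\rho_Y$. Pillow boundaries and mirrored singular fibers add bookkeeping here. I would first handle it concretely using the presentation of $\pi_1(\cX)$ for nonorientable bases, checking each relation.
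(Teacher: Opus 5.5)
Your strategy is the same as the paper's. You take $\rho_Y$ from Lemma \ref{lem:boundary-lengths} applied to $\cO$ itself. You replace $\rho_\R^+$ by its $\tau$-anti-invariant part, which keeps the boundary values because $p^*\mu_\partial$ is $\tau$-invariant and $\tau f=f^{-1}$. You then send the nontrivial coset to reflections. Two points need correcting, neither fatal.

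First, the step you flag as the main obstacle is not one, and you do not need a presentation of $\pi_1(\cX)$ for nonorientable bases. Fix one $y\in\pi_1(\cX)\setminus\pi_1(\cX^+)$. Then $\pi_1(\cX)$ is generated by $\pi_1(\cX^+)$ and $y$, subject to the relations of $\pi_1(\cX^+)$, the relations $yxy^{-1}=\tau(x)$, and a single relation $y^2=w$ with $w\in\pi_1(\cX^+)$.
\begin{itemize}
\item Put $\phi=\tfrac12(\rho_\R^+-\rho_\R^+\circ\tau)$ and send $y$ to the reflection $r(t)=-t$.
\item Conjugation by $r$ negates translations. This matches $\phi(\tau x)=-\phi(x)$, which holds because $\tau^2$ is conjugation by $w$ and $\rho_\R^+$ factors through the abelianization.
\item Since $y$ commutes with $w=y^2$, we have $\tau(w)=w$. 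Anti-invariance then forces $\phi(w)=0$, matching $r^2=\id$.
\end{itemize}
So any reflection center works, and there is no cocycle obstruction.

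Relatedly, your description of the pillow isotropy elements in Step 3 is off. They map to reflections of $\cO$, so they act as a reflection of $Y$ in a geodesic times a reflection of $\R$. A rotation of $Y$ about a point composed with a reflection of $\R$ would reverse orientation. This comes out automatically once $\rho_Y$ factors through $\pi_1(\cO)$.

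Second, your claim in Step 1 that $\cX\neq\cT\times[0,1]$ forces $\cX^+\neq T^2\times[0,1]$ is false. If $\cO$ is a Möbius band, or an annulus with one boundary circle mirrored, then $\cO^+$ is an annulus and $\cX^+\cong T^2\times[0,1]$. An example is the twisted $I$-bundle over the Klein bottle, fibered over the Möbius band. In that case Leeb's conditions (Remark \ref{rmk:leeb}) are not the extendability criterion.

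This does not break your argument. All you actually use is a homomorphism $\rho_\R^+$ with $\rho_\R^+(x)=B(x,f)/|f|$ on boundary classes. The product case of Proposition \ref{prop:leeb} supplies this too, since there $B_0=B_1$ and the extension is a product metric. So phrase Step 1 as invoking Proposition \ref{prop:leeb} in whichever of its two cases applies, as the paper does.
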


\begin{proof}[Proof of Proposition \ref{prop:totally-orientable-cover}]

First we remark that the case $\cX$ is already totally orientable is trivial, so we assume $\cX$ is not totally orientable. Also, the forward direction is obvious: if $\mu_\partial$ extends to $\mu$, then $p^*(\mu)$ extends $p^*(\mu_\partial)$.

 For the reverse direction, assume $p^*(\mu_\partial)$ extends geometrically to $\cX^+$. To prove the proposition, we construct a representation $\rho:\pi_1(\cX)\to\Isom^+(Y\times\R)$, where $Y$ is either $\mathbb H^2$ or $\mathbb E^2$, that defines a nonpositively curved metric on $\cX$ that extends $\mu_\partial$. Such a representation $\rho$ is determined by representations $\rho_Y:\pi_1(\cX) \to \Isom(Y)$ and $\rho_\R:\pi_1(\cX)\to\Isom(\R)$, which we construct separately. As in the proof of the homological criterion (Proposition \ref{prop:leeb}) the main subtlety is in defining $\rho_\R$. 

First we explain how to construct $\rho_Y$. The boundary metric $\mu_\partial$ imposes lengths on the boundary components of the base orbifold $\cO$ of $\cX$. We choose $\rho_Y$ to factor as $\pi_1(\cX)\to\pi_1(\cO)\to\Isom(Y)$, where the latter map defines a geometric structure on $\cO$ with the prescribed  boundary lengths. This is possible by Lemma \ref{lem:boundary-lengths} and that fact that $O \not\cong S^2 \times [0,1]$ or $\bar I \times [0,1]$ (since $\cX \not\cong \cT \times [0,1]$).

The restriction of $\rho_Y$ to $\pi_1(\cX^+)$ is a representation $\rho_Y^+:\pi_1(\cX^+)\to\Isom^+(Y)$ 
that realizes the boundary lengths of $\cO^+$ determined by $p^*\mu_\partial$. By the assumption that $p^*\mu_\partial$ extends geometrically and the homological criterion (Proposition \ref{prop:leeb}), which provides an extension by a geometric metric, there is a representation $\rho_\R^+:\pi_1(\cX^+)\to\Isom^+(\R)$ so that 
\[\rho_Y^+\times \rho_\R^+:\pi_1(\cX^+)\to\Isom^+(Y)\times \Isom^+(\R)\]
defines a nonpositively curved metric on $\cX^+$ that extends $p^*\mu_\partial$. 

Now we define $\rho_\R:\pi_1(\cX)\to\Isom(\R)$ so that 
\[\rho_Y\times\rho_\R:\pi_1(\cX)\to\Isom(Y)\times\Isom(\R)\]
defines a nonpositively curved metric on $\cX$ that extends $\mu_\partial$. For this we require: the restriction of $\rho_\R$ to $\pi_1(\cX^+)$ is contained in $\Isom^+(\R)$; $\rho_\R(f)\neq0$, where $f$ is the fiber class; $\rho_\R(y)$ is a reflection for every (equivalently some) $y$ in $\pi_1(\cX)\setminus\pi_1(\cX^+)$. 

In order for the metric defined by $\rho_Y\times\rho_\R$ to extend $\mu_\partial$, we require additionally that for $x\in\pi_1(\cX^+)$ freely homotopic into $\partial\cX^+$, 
\[\rho_\R(x)=\frac{B_{p^*\mu_\partial}(x,f)}{B_{p^*\mu_\partial}(f,f)^{1/2}}=\rho_\R^+(x).\] 
The second equality holds by definition of $\rho_\R^+$. 

Define $\rho_\R$ as follows. Fix an element $y$ in $\pi_1(\cX)\setminus\pi_1(\cX^+)$ and let $\tau$ be the automorphism of $\pi_1(\cX^+)$ induced by conjugation by $y$. 
\begin{itemize}
\item $\rho_\R(y)$ is the reflection $r\in\Isom(\R)$ about the origin $[t\mapsto -t]$
\item for $x\in\pi_1(\cX^+)$, \[
\rho_\R(x)=\frac{1}{2}\big[\rho_\R^+(x)-\rho_\R^+(\tau x)\big].
\]
\end{itemize}

We check that $\rho_\R$ defines a homomorphism. This is clear for the restriction of $\rho_\R$ to $\pi_1(\cX^+)$. Since $\pi_1(\cX)$ contains $\pi_1(\cX^+)$ as an index-2 normal subgroup, it has a presentation with generating set $\pi_1(\cX^+)$ and $y$ and whose relations are the relations in $\pi_1(\cX^+)$ and the relations $yxy^{-1}=\tau(x)$ for $x\in\pi_1(\cX^+)$ and the relation $y^2=w$ for some $w\in\pi_1(\cX^+)$. We need to check that these last two relations are preserved by $\rho_\R$. 
For $yxy^{-1}=\tau(x)$, we compute
\[
\rho_\R(y)\rho_\R(x)\rho_\R(y)^{-1}=r\rho_\R(x) r^{-1}=-\rho_\R(x)
\]
and
\[
\rho_\R(\tau x)=\frac{1}{2}\big[\rho_\R^+(\tau x)-\rho_\R^+(\tau^2x)\big]=-\rho_\R(x),
\]
observing that $\tau^2x=y^2xy^{-2}=wxw^{-1}$ and that $\rho_\R^+$ factors through the abelianization. For the relation $y^2=w$, we compute
\[\rho_\R(y)^2=r^2=0\>\>\>\text{ in }\R=\Isom^+(\R)
\]
and
\[\rho_\R(w)=\frac{1}{2}\big[\rho_\R^+(w)-\rho_\R^+(\tau w)\big]=0
\]
with the last equality holding because $\tau(w)=ywy^{-1}$ equals $w$ since $y$ commutes with $w=y^2$. This shows $\rho_\R$ is a homomorphism. 

Finally, we check that $\rho_\R=\rho_\R^+$ on $\partial\cX^+$. By definition of $\rho_\R$, it is equivalent to show that $\rho_\R^+(\tau x)=-\rho_\R^+(x)$. Here we use that, by construction, $p^*\mu_\partial$ is invariant under the deck transformation of the cover $\cX^+\to\cX$ (which acts on $\pi_1(\cX^+)$ by $\tau$). Given this, we compute for $x\in\pi_1(\cX^+)$ freely homotopic to the boundary
\[\rho_\R^+(\tau x)=\frac{B_{p^*\mu_\partial}(\tau x,f)}{|f|}=\frac{B_{p^*\mu_\partial}(x,\tau f)}{|f|}=\frac{B_{p^*\mu_\partial}(x,-f)}{|f|}=-\rho_\R^+(x).\]
Here we also use that $\tau(f)=f^{-1}$ since $\cX$ is not totally orientable. 
\end{proof}

\subsubsection{Geometric extendability with a free boundary component}

For later use, we record the following version of Proposition \ref{prop:leeb} in the case when the boundary metric is not specified on the entire boundary. 

\begin{lem} \label{lem:free-boundary}
    Let $\pi: \cX \rightarrow \cO$ be a compact orientable Seifert fibered $3$-orbifold with nonempty boundary, and fix a boundary component $\mathcal S$. A flat metric $\mu_{\partial}$ on $\partial\cX\setminus\mathcal S$ is geometrically extendable on $\cX$ if and only if all regular fibers of $\partial\cX\setminus\mathcal S$ have the same $\mu_{\partial}$-length. 
\end{lem}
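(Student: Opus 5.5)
The plan is to reduce to the totally-orientable case and then use the free boundary component $\mathcal S$ to absorb the relative Euler number condition.

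\textbf{Necessity.} Suppose $\mu$ is a nonpositively curved metric on $\cX$ extending $\mu_\partial$, with $\partial\cX\setminus\mathcal S$ totally geodesic. As in the forward direction of Proposition \ref{prop:leeb}, I would argue that the universal cover splits as $Y\times\R$, with the $\R$-factor given by the regular fiber. If $\mathcal S$ is not totally geodesic, one first has to justify this splitting, for instance by doubling along the totally geodesic part. Granting the splitting, every regular fiber is a closed geodesic of the same length $\rho_\R(f)$. Hence all regular fibers on $\partial\cX\setminus\mathcal S$ have the same $\mu_\partial$-length. On a pillow component the regular fiber double covers the mirrored-interval picture in the same way for each component, so the lengths are still comparable.

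\textbf{Sufficiency.} Assume all regular fibers on $\partial\cX\setminus\mathcal S$ have a common length $\ell$. The first step is to choose a flat metric $\mu_{\mathcal S}$ on $\mathcal S$ so that the full boundary metric $\mu_\partial\cup\mu_{\mathcal S}$ is geometrically extendable. The freedom is in the $B(b,f)$ coordinate on $\mathcal S$.

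\emph{Totally orientable case.} Here $\mathcal S$ is a torus. Take $B_{\mathcal S}$ with $B_{\mathcal S}(f,f)=\ell^2$ and
\[
B_{\mathcal S}(b_{\mathcal S},f)=e(\cX,\kb)\,\ell^2-\sum_{i}B_i(b_i,f_i),
\]
where the sum runs over the other boundary components. Choose $B_{\mathcal S}(b_{\mathcal S},b_{\mathcal S})$ large enough that $B_{\mathcal S}$ is positive definite. Then Leeb's conditions (1) and (2) of Remark \ref{rmk:leeb} hold. Proposition \ref{prop:leeb} then gives an extension, since $\cX\neq T^2\times[0,1]$ as long as $\mathcal S$ is not the only other boundary. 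If $\cX=T^2\times[0,1]$ with $\mathcal S$ one end, set $B_{\mathcal S}=B_0$ and use the product metric. If $\mathcal S$ is the only boundary component there is no condition, and any choice of $\mu_{\mathcal S}$ works.

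\emph{General case.} Pass to the totally-orientable double cover $\cX^+$ and use Proposition \ref{prop:totally-orientable-cover}. I would choose $\mu_{\mathcal S}$ on $\mathcal S$ and check Leeb's conditions for its pullback. The preimage of $\mathcal S$ is either two tori swapped by the deck involution $\tau$, or a single torus if $\mathcal S$ is a pillow. A $\tau$-invariant pulled-back metric flips the sign of $B(x,f)$ under $\tau$, since $\tau f=-f$. So the contributions $B(b,f)$ from paired tori, and from the other preimages, come with constraints.

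\textbf{Main obstacle.} I expect the hard step to be verifying that the relative Euler number condition on $\cX^+$ can still be met with a $\tau$-invariant choice on the preimage of $\mathcal S$. To handle it I would try the representation approach of the proof of Proposition \ref{prop:totally-orientable-cover} directly, rather than working through the cohomology class. Concretely:
\begin{enumerate}[(1)]
\item Build $\rho_Y$ from Lemma \ref{lem:boundary-lengths}, leaving the boundary length of $\mathcal S$ free. This sidesteps the equal-length restriction for annuli, since the $\mathcal S$ end can be matched.
\item Define $\rho_\R$ via $\rho_\R(y)=r$, with the antisymmetrized formula on $\pi_1(\cX^+)$.
\item Assign the values of $\rho_\R^+$ on the free boundary generators so that the single relation in $H_1(\cX^+;\R)$ is satisfied.
\item Read off $\mu_{\mathcal S}$ from the resulting action.
\end{enumerate}
Freeness of one boundary generator in $H_1$ is exactly what makes step (3) solvable.
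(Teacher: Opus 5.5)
Your necessity argument and your totally-orientable case are the paper's: solve Leeb's conditions for $B_{\mathcal S}(f,f)$ and $B_{\mathcal S}(b_{\mathcal S},f)$, take $B_{\mathcal S}(b_{\mathcal S},b_{\mathcal S})$ large, and use the product metric for $T^2\times[0,1]$. No doubling is needed for the splitting, since in a geometric extension $\mathcal S$ is totally geodesic too. One slip: when $\mathcal S$ is the only boundary component, it is not true that any $\mu_{\mathcal S}$ works. The Euler relation still forces $B_{\mathcal S}(b_{\mathcal S},f)=e(\cX,\kb)B_{\mathcal S}(f,f)$, but your formula with an empty sum gives a valid choice.

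Where you genuinely differ is the non-totally-orientable case. The paper stays at the level of boundary metrics. It takes $\nu^{npc}$ on $\partial\cX^+$ from the totally-orientable case and a $\tau$-invariant $\nu^\tau$. It then uses Lemma \ref{lem:torus-pillow} to get $\nu_0$ isotopic to $\nu^{npc}$ and affinely equivalent to $\nu^\tau$, and averages $\nu_0$ with $\tau^*\nu_0$. The average is flat by Remark \ref{rmk:average} and still satisfies Leeb's conditions by Remark \ref{rmk:convex-combination}; it descends to $\partial\cX$, and the paper quotes Proposition \ref{prop:totally-orientable-cover}.

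You instead rerun the proof of Proposition \ref{prop:totally-orientable-cover} with a free end. You define $\rho_\R^+$ on $H_1(\cX^+;\R)$ by $f\mapsto|f|$ and $x\mapsto B_{p^*\mu_\partial}(x,f)/|f|$ on the tori over $\partial\cX\setminus\mathcal S$; this is consistent with the relations $f_i=f_1$ precisely because the fiber lengths agree. You solve the Euler relation using the $b$-generator(s) over $\mathcal S$, antisymmetrize, and read off $\mu_{\mathcal S}$ from the resulting action.

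This is sound. Antisymmetrization leaves the prescribed peripheral values unchanged, because $p^*\mu_\partial$ is $\tau$-invariant on those components and $\tau f=f^{-1}$. The $\tau$-invariance over $\mathcal S$ that you flag as the main obstacle is automatic, since $\mu_{\mathcal S}$ is produced by a $\pi_1(\cX)$-action.

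Your route dispenses with Lemma \ref{lem:torus-pillow} and the averaging. Because you build $\rho_Y$ yourself using the free length at $\mathcal S$, it also covers $S^2(2,2,2,2)\times[0,1]$, which Proposition \ref{prop:totally-orientable-cover} formally excludes (there the product metric works anyway). The paper's route is more modular: it uses the earlier results as black boxes and rehearses the isotopy-and-average trick that is reused in Proposition \ref{prop:average-JSJ}.
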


When $\cX$ is a manifold and $\chi(\cO) < 0$, Lemma \ref{lem:free-boundary} follows from \cite[Prop.\ 2.6]{leeb}.

\begin{proof}[Proof of Lemma \ref{lem:free-boundary}]
The forward direction follows from the fact that all regular fibers of a Seifert fibered orbifold equipped with a compatible nonpositively curved metric have the same length. 

For the reverse direction, we begin with the case $\cX$ is totally orientable. We want to choose a flat metric on $\mathcal S$ whose inner product $B_{\mathcal S}$ (together with the inner products on the other boundary components coming from $\mu_\partial$) satisfies the linear equations in Leeb's condition (Remark \ref{rmk:leeb}). Fixing a basis $b_{\mathcal S},f_{\mathcal S}$ for $H_1(\mathcal S)$ with $f_{\mathcal S}$ the fiber direction, the values $B_{\mathcal S}(f_{\mathcal S},f_{\mathcal S})$ and $B_{\mathcal S}(b_{\mathcal S},f_{\mathcal S})$ are determined by the fiber length and relative Euler number constraints in Remark \ref{rmk:leeb}. If $\cX\neq T^2\times[0,1]$, we only need to choose the value $B_{\mathcal S}(b_{\mathcal S},b_{\mathcal S})>0$ sufficiently large so that $B_{\mathcal S}$ is positive definite. If $\cX=T^2\times[0,1]$, we choose the two boundary inner products to agree.

Assume now that $\cX$ is not totally orientable, and let $p: \cX^+ \rightarrow \cX$ be the totally-orientable double cover. It suffices to extend $p^*\mu_\partial$ to a $\tau$-invariant flat metric $\nu$ on all of $\partial\cX^+$ that extends geometrically to $\cX^+$. Then by $\tau$-invariance, $\nu$ has the form $p^*(\widehat{\mu_\partial})$ for a flat metric $\widehat{\mu_\partial}$ that extends the metric $\mu_\partial$ on $\partial\cX\setminus\mathcal S$ to $\partial\cX$. By construction 
$p^*(\widehat{\mu_\partial})=\nu$ extends geometrically and this implies that $\widehat{\mu_\partial}$ extends geometrically by Proposition \ref{prop:totally-orientable-cover}.

We proceed to construct the flat metric $\nu$ on $\partial\cX^+$. By the preceding step, we can find a flat metric $\nu^{npc}$ on $\partial\cX^+$ that agrees with $p^*\mu_\partial$ on $\partial\cX^+\setminus p^{-1}(\mathcal S)$ and extends geometrically to $\cX^+$ (there is a slight difference in that $p^{-1}(\mathcal S)$ can have two components, but this does not present any additional difficulty). Choose also a flat metric $\nu^\tau$ that extends $p^*\mu_\partial$ and additionally is invariant under the nontrivial deck transformation $\tau$ of the cover $p$. By Lemma \ref{lem:torus-pillow}, there is a (unique) flat metric $\nu_0$ on $\partial\cX^+$ that is isotopic to $\nu^{npc}$ and affinely equivalent to $\nu^\tau$. Define $\nu=\frac{1}{2}(\nu_0+\tau^*\nu_0)$, 
which is $\tau$ invariant by construction. The metric $\nu$ is flat: $\nu_0$ and $\tau^*(\nu_0)$ are affinely equivalent, each being affinely equivalent to $\nu^\tau$, so the average is also flat (cf.\ Remark \ref{rmk:average}). Finally, $\nu$ extends geometrically to $\cX^+$ because $\nu_0$ extends geometrically (being isotopic to $\nu^{npc}$), as does $\tau^*\nu_0$, and this implies their average extends geometrically by Remark \ref{rmk:convex-combination}. 
\end{proof}

\begin{rmk}
An argument similar to the averaging argument in the last paragraph of the proof of Lemma \ref{lem:free-boundary} will appear again in the proof of Proposition \ref{prop:average-JSJ}.
\end{rmk}

Note that Leeb's condition (Remark \ref{rmk:leeb}) implies that, when $\cX \not\cong \cT \times [0,1]$, the geometric extendability of a flat boundary metric $\mu_\partial$ on $\partial\cX$ is independent of the metric in the direction orthogonal to a regular fiber. Thus, we may freely rescale $\mu_\partial$ in the direction orthogonal to the fibers while preserving geometric extendability. This observation yields Corollary \ref{cor:orthogonal-scaling}, which we will use in the proof of Theorem \ref{thm:main} when the JSJ decomposition of $M$ has an atoroidal component. 

\begin{cor} \label{cor:orthogonal-scaling} 
Let $\cX$ be a compact orientable Seifert fibered $3$-orbifold with nonempty boundary.  Assume $\cX$ is not $\cT \times [0,1]$ for $\cT$ a torus or pillow. Fix a nonempty union $\mathcal Z$ of boundary components  of $\cX$, fix a nonpositively curved metric $\mu_\partial$ on $\partial\cX\setminus\mathcal Z$ with constant fiber length, and fix a nontrivial closed curve $a_{\cT}$ on each component $\mathcal T$ in $\mathcal Z$ such that $a_{\cT}$ is not isotopic to a regular fiber. If $C>0$ is sufficiently large, then $\mu_\partial$ extends geometrically to $\cX$ in such a way that the length of $a_{\cT}$ is $C$ for each component $\cT$ of $\mathcal Z$. 
\end{cor}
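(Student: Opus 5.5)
First I reduce to the totally orientable case. If $\cX$ is not totally orientable, I pass to the totally orientable double cover $p:\cX^+\to\cX$ and argue as in the proof of Lemma \ref{lem:free-boundary}. The goal is a $\tau$-invariant flat metric on $\partial\cX^+$ with four properties: it agrees with $p^*\mu_\partial$ off $p^{-1}(\mathcal Z)$, it extends geometrically, and it gives each lift of $a_\cT$ length $C$ (or $2C$ when the lift double covers $a_{\cT}$, which happens for pillow components). Proposition \ref{prop:totally-orientable-cover} then descends the extension to $\cX$. The hypothesis $\cX\neq\cT\times[0,1]$ guarantees $\cX^+\neq T^2\times[0,1]$.

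\textbf{Totally orientable case.} For each component $\cT\subset\mathcal Z$ I fix a basis $b_\cT,f_\cT$ of $H_1(\cT)$ with $f_\cT$ the fiber. By Remark \ref{rmk:leeb}, extendability of the full boundary metric is equivalent to Leeb's conditions (1) and (2).
\begin{enumerate}[(i)]
\item Set $B_\cT(f_\cT,f_\cT)=\ell^2$, where $\ell$ is the common fiber length on $\partial\cX\setminus\mathcal Z$ (any $\ell$ if $\partial\cX=\mathcal Z$).
\item Choose the values $B_\cT(b_\cT,f_\cT)$ so that $\sum_i B(b_i,f_i)=e(\cX,\kb)\ell^2$. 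Since $\mathcal Z\neq\varnothing$ there is at least one free variable, and I fix some solution once and for all, independent of $C$.
\item The only remaining unknowns are $s_\cT:=B_\cT(b_\cT,b_\cT)$, and Leeb's conditions impose no constraint on them.
\end{enumerate}
Write $a_\cT=m b_\cT+n f_\cT$; since $a_\cT$ is not a fiber, $m\neq0$. Then
\[B_\cT(a_\cT,a_\cT)=m^2 s_\cT+2mn B_\cT(b_\cT,f_\cT)+n^2\ell^2 ,\]
which is affine in $s_\cT$ with positive slope $m^2$. Positive definiteness requires $s_\cT>B_\cT(b_\cT,f_\cT)^2/\ell^2$. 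As $s_\cT$ ranges over this interval, $B_\cT(a_\cT,a_\cT)$ sweeps out an interval $(c_\cT,\infty)$. So for $C^2>\max_\cT c_\cT$ I can choose $s_\cT$ with $\mathrm{Length}(a_\cT)=C$. By Claim 1 in Lemma \ref{lem:torus-pillow}, each inner product is realized by a flat metric, and Proposition \ref{prop:leeb} gives the geometric extension.

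\textbf{Non-totally-orientable case.} The previous step yields a flat metric $\nu^{npc}$ on $\partial\cX^+$ extending $p^*\mu_\partial$ and extending geometrically, with the prescribed lengths on the lifts $\tilde a$. I need to make it $\tau$-invariant without disturbing these lengths. I would choose a $\tau$-invariant flat metric $\nu^\tau$ extending $p^*\mu_\partial$, use Lemma \ref{lem:torus-pillow} to get $\nu_0$ isotopic to $\nu^{npc}$ and affinely equivalent to $\nu^\tau$, and set $\nu=\frac12(\nu_0+\tau^*\nu_0)$. This $\nu$ is flat by Remark \ref{rmk:average} and extends geometrically by Remark \ref{rmk:convex-combination}.

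\textbf{Main obstacle.} Averaging controls the length of $a$ only in one direction. Because $B$ is affine under averaging, $B_\nu(\tilde a,\tilde a)$ is the average of $B_{\nu_0}(\tilde a,\tilde a)$ and $B_{\nu_0}(\tau\tilde a,\tau\tilde a)$, and $\tau\tilde a$ is generally a different class. To handle this I plan to bypass averaging on the components of $p^{-1}(\mathcal Z)$ and instead prescribe $\tau$-invariant inner products there directly:
\begin{itemize}
\item Since $\tau f=-f$, Leeb's conditions are $\tau$-compatible, as the proof of Proposition \ref{prop:totally-orientable-cover} shows.
\item When $\tau$ swaps two torus components, I choose the inner product on one and transport it to the other.
\item When $\tau$ preserves a torus component (a pillow downstairs), the $\tau$-invariant inner products form a convex cone. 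It still contains forms with $B(\tilde a,\tilde a)$ arbitrarily large, because I can add $t\,\eta\otimes\eta$ where $\eta$ is $\tau$-anti-invariant and vanishes on $f$.
\end{itemize}
Checking that the linear constraint (2) still has a $\tau$-invariant solution is the step I expect to need the most care.
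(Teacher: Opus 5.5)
Your totally orientable case is correct, and it is the paper's key observation written in coordinates: Leeb's conditions (1)--(2) only involve values $B(\cdot,f)$, so $B_\cT(b_\cT,b_\cT)$ (the part of the metric orthogonal to the fiber) is unconstrained. The non-totally-orientable case is where the proposal has a gap, and one claim in its setup is false. You assert that $\cX\neq\cT\times[0,1]$ forces $\cX^+\neq T^2\times[0,1]$. A counterexample is the twisted $I$-bundle over the Klein bottle, Seifert fibered over the M\"obius band: it is not $\cT\times[0,1]$, but $\cX^+=T^2\times[0,1]$, with $\tau$ exchanging the two boundary tori. For such $\cX$ the criterion on $\cX^+$ from Proposition \ref{prop:leeb} is $B_0=B_1$, not (1)--(2). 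So your step (iii) does not produce the metric $\nu^{npc}$ that you feed into the averaging. The case can be repaired: here $\mathcal Z=\partial\cX$ is a single torus, and extendability means $B$ is invariant under an involution $A$ of $H_1$ with $Af=-f$. Since $A$ preserves $\R f$, it preserves the $B$-orthogonal complement of $f$, so that part of $B$ is again free. As written, however, this case is not covered.

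The ``main obstacle'' you identify is not real. Your replacement plan stops exactly at the step you flag as delicate, but your first plan already works. You built $\nu^{npc}$ so that \emph{every} lift of $a_\cT$ has the prescribed length. Since $\nu_0$ is isotopic to $\nu^{npc}$, we have $B_{\nu_0}=B_{\nu^{npc}}$. Moreover, $\tau$ permutes the lifts of $a_\cT$: over a pillow $\tau_*=-\mathrm{id}$ on $H_1$, and over a torus it swaps the two lifts. Hence $B_{\tau^*\nu_0}(\tilde a,\tilde a)=B_{\nu_0}(\tau_*\tilde a,\tau_*\tilde a)$ equals the prescribed value, and so does the average $B_\nu(\tilde a,\tilde a)$. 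Also, an essential simple closed curve on a pillow lifts to two curves, each mapping with degree one, so no ``$2C$'' arises.

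The paper avoids this bookkeeping by reversing the order of operations. First, put flat metrics with the common fiber length on all but one component of $\mathcal Z$, and use Lemma \ref{lem:free-boundary} (whose averaging already handles non-totally-orientable $\cX$) to get a geometrically extendable metric on all of $\partial\cX$. Then rescale it on each component of $\mathcal Z$ in the direction $B$-orthogonal to the fiber. This rescaling has three useful properties:
\begin{enumerate}[(a)]
\item it fixes $B(\cdot,f)$, so it preserves Leeb's conditions (or $B_0=B_1$) on $\cX^+$;
\item it is performed downstairs, so $\tau$-invariance is automatic;
\item as the scale factor varies, $\mathrm{Length}(a_\cT)$ sweeps out $(|B(a_\cT,f)|/|f|,\infty)$.
\end{enumerate}
Proposition \ref{prop:totally-orientable-cover} then gives the extension.
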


\subsubsection{\boldmath Covers and boundary extension}\label{sec:boundary-characteristic}

The following Proposition \ref{prop:cover-totally-orientable} shows that for totally-orientable Seifert orbifolds, to show a flat boundary metric extends geometrically, it suffices to prove this on any cover.

\begin{prop}\label{prop:cover-totally-orientable}
Let $p:\cY\to\cX$ be a covering map between totally orientable Seifert orbifolds. Then a flat boundary-metric $\mu_\partial$ on $\mathcal X$ is geometrically extendable if and only if  $p^*(\mu_\partial)$ is geometrically extendable on $\cY$. 
\end{prop}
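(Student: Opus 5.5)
The forward direction is immediate: if $\mu_\partial$ extends to a nonpositively curved metric $\mu$ on $\cX$ with totally geodesic boundary, then $p^*\mu$ is such an extension of $p^*\mu_\partial$ on $\cY$. For the reverse direction I will use the homological criterion (Proposition \ref{prop:leeb}), applied to both $\cX$ and $\cY$; both are totally orientable and nonpositively curved. The fibration of $\cY$ may be taken to be the pullback of that of $\cX$: by uniqueness of Seifert fibrations, it suffices to exclude the Euclidean cases where they are not unique, and those are handled separately. With this choice, $p$ sends regular fibers to regular fibers, each as a degree-$k$ cover for some fixed $k\ge1$, so $p_*f_\cY = k f_\cX$ in homology. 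First I treat $\cX=T^2\times[0,1]$. Then $\cY=T^2\times[0,1]$ too, and on each end $p$ restricts to the same covering map $T^2\to T^2$. The condition $B_0=B_1$ is then transported by $p_*$ on $H_1$, because $B_{p^*\nu}(x,y)=B_\nu(p_*x,p_*y)$, and $p_*$ is injective on $H_1(\,\cdot\,;\R)$.

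For the general case I will use the transfer. Assume $\beta_{p^*\mu_\partial}\in H^1(\partial\cY;\R)$ is the restriction of some $\gamma\in H^1(\cY;\R)$. The key identity is that for each boundary torus $T'\subset\partial\cY$ covering $T\subset\partial\cX$ and each $x\in H_1(T';\R)$,
\[\beta_{p^*\mu_\partial}(x)=B_{p^*\mu}(x,f_\cY)=\tfrac1k B_{\mu}(p_*x,p_*f_\cY)=B_\mu(p_*x,f_\cX)=\beta_{\mu_\partial}(p_*x).\]
Here I use that $p^*\mu$ is locally isometric, so geodesic lengths and angles are computed downstairs. Equivalently, $\beta_{p^*\mu_\partial}=p^*\beta_{\mu_\partial}$ on $\partial\cY$. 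Now consider the transfer $p_!:H^1(\cY;\R)\to H^1(\cX;\R)$ for the finite branched map of underlying spaces $|\cY|\to|\cX|$. This map is a branched cover along fibers over cone points, and transfer still exists rationally, via orbifold/group cohomology, where $H^1(\cX;\R)=\mathrm{Hom}(\pi_1(\cX),\R)$ and $\pi_1(\cY)$ has finite index. The transfer commutes with restriction to the boundary, and $p_!p^*=\deg(p)$. On the boundary, $p_!(p^*\beta_{\mu_\partial})$ restricted to $T$ equals $\sum_{T'\to T}\deg(T'\to T)\,\beta_{\mu_\partial}|_T$. Hence $\frac{1}{\deg p}p_!\gamma\in H^1(\cX;\R)$ restricts to $\beta_{\mu_\partial}$, provided $\sum_{T'\to T}\deg(T'\to T)=\deg p$ for every $T$. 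That equality holds because $p$ restricted to $p^{-1}(T)$ is a covering of total degree $\deg p$. Proposition \ref{prop:leeb} then gives extendability on $\cX$.

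I expect the main obstacle to be justifying these two ingredients in the orbifold setting. The first is the identification $H^1(\cX;\R)\cong\mathrm{Hom}(\pi_1\cX,\R)$ together with a transfer compatible with boundary restriction. I will work with group cohomology of $\pi_1(\cX)$ and the peripheral subgroups, using the isomorphism $\pi_1(\cX)^{\mathrm{ab}}\otimes\R\cong H_1(|\cX|;\R)$ from the proof of Proposition \ref{prop:leeb}. The double-coset formula for restriction and corestriction then gives the boundary computation. The second is the compatibility of the fiber structures. Here I will argue that when $\cY\ne T^2\times[0,1]$ and $\cY$ is not Euclidean, the fiber subgroup is the unique maximal normal cyclic subgroup, so fibers correspond. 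In the Euclidean cases with boundary, $\cX$ is $T^2\times I$ or a twisted $I$-bundle quotient, and I will check these directly, possibly using Leeb's conditions of Remark \ref{rmk:leeb} by hand.
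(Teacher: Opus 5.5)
Your proposal is correct and follows essentially the same route as the paper: the forward direction by pullback, the $T^2\times[0,1]$ case via $B_0=B_1$, and otherwise the homological criterion combined with a transfer $p_!$ satisfying $p_!p^*=\deg(p)$ and $p_!\circ i^*_{\cY}=i^*_{\cX}\circ p_!$. One small slip is that $B_{p^*\mu}(x,y)=B_\mu(p_*x,p_*y)$ with no factor $\tfrac1k$, so the correct identity is $\beta_{p^*\mu_\partial}=k\,p^*\beta_{\mu_\partial}$ (the paper's $d_1$). This is harmless because the image of $i^*_{\cY}$ is a linear subspace, and your group-cohomological corestriction is a sensible way to justify the transfer for the branched map $|\cY|\to|\cX|$.
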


\begin{proof}
The forward direction is obvious: if $\mu_\partial$ extends, then $p^*\mu_\partial$ extends via the pullback metric. For the reverse direction, we apply the homological criterion (Proposition \ref{prop:leeb}) and use the transfer in cohomology. Since the statement of Proposition \ref{prop:leeb} depends on whether the orbifold is $T^2\times[0,1]$ or not, we separate these cases. 

If $\cX\neq T^2\times[0,1]$, then the proof follows by considering the following diagram. 
\[\begin{xy}
(-25,0)*+{H^1(\cY;\R)}="A";
(0,0)*+{H^1(\partial\cY;\R)}="B";
(-25,-15)*+{H^1(\cX;\R)}="C";
(0,-15)*+{H^1(\partial\cX;\R)}="D";
{\ar"C";"A"}?*!/^3mm/{p^*};
{\ar "D";"B"}?*!/_3mm/{p^*};
{\ar "C";"D"}?*!/^3mm/{i^*_{\cX}};
{\ar "A";"B"}?*!/_3mm/{i^*_{\cY}};
{\ar@/_5mm/ "A";"C"}?*!/^3mm/{p_!};
{\ar@/^5mm/ "B";"D"}?*!/_3mm/{p_!};
\end{xy}\]

The inner square commutes by functoriality, and the outer square commutes by naturality of the pushforward map; furthermore, $p_!p^*$ is multiplication by $\deg(p)$ (see e.g.\  \cite[\S3.G]{hatcher}). 

The key relationship we need is $\beta_{p^*\mu_\partial}=d_1\>p^*(\beta_{\mu_\partial})$. Indeed, writing $\hat f$ and $f$ for the fiber classes with $p_*(\hat f)=d_1f$, we have 
\[\beta_{p^*\mu_\partial}(x)=B_{p^*\mu_\partial}(x,\hat f)=B_{\mu_\partial}(p_*x,p_*\hat f)=d_1\>B_\mu(p_*x,f)=d_1\>p^*\beta_{\mu_\partial}(x).\]
Consequently, if $p^*\mu_\partial$ extends geometrically, then $p^*(\beta_{\mu_\partial})$ is in the image of $i^*_{\cY}$. Applying $p_!$ then implies $\beta_{\mu_\partial}$ is in the image of $i^*_{\cX}$, so $\mu_\partial$ extends geometrically, as desired.

If $\cX=T^2\times[0,1]$, then without loss of generality we can assume $\cY$ is connected, so $\cY=T^2\times[0,1]$. If $p^*\mu_\partial$ extends geometrically, then the two boundary inner products $B_{p^*\mu_\partial}^0$, $B_{p^*\mu_\partial}^1$ agree, which implies that $B_{\mu_\partial}^0=B_{\mu_\partial}^1$ since $p$ restricts to the same cover on the two boundary components. Thus $\mu_\partial$ extends geometrically. 
\end{proof}

\section{Proof of Main Theorem} 

Recall from the statement of Theorem \ref{thm:main} that $M$ is a nonpositively curved closed, and $G$ is a finite group acting by orientation-preserving diffeomorphisms of $M$. We want to show that $G$ preserves some nonpositively curved metric, or equivalently, that the quotient orbifold $\cM:=M/G$ has a nonpositively curved metric. By the discussion in Section \ref{sec:previous-work}, it suffices to consider the case when the JSJ decomposition of $M$ is nontrivial. 

\subsection{Case: atoroidal components in the JSJ decomposition}\label{sec:atoroidal-proof}

In this section we prove Theorem \ref{thm:main} in the case when the JSJ decomposition of $M$ has at least one atoroidal component. This assumption implies that the JSJ decomposition of $\cM$ also has an atoroidal component, and the following Proposition \ref{prop:nonpositively curved-atoroidal-component}, implies that $\cM$ is nonpositively curved in this case. The proof relies on the results about geometric extensions of flat boundary-metrics proved in Section \ref{sec:extend}. 

\begin{prop}\label{prop:nonpositively curved-atoroidal-component}
Let $\mathcal M$ be an irreducible $3$-orbifold. Assume that the JSJ decomposition of $\mathcal M$ has an atoroidal component. Then $\mathcal M$ has a nonpositively curved metric.
\end{prop}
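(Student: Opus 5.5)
We will follow Leeb's strategy for manifolds \cite{leeb}, with the earlier results substituted for their manifold versions. Let $\cZ\subset\cM$ be a JSJ orbifold, so each component of $\cM\mid\cZ$ is either atoroidal or Seifert fibered, and each component of $\cZ$ is a torus, pillow, or turnover. The plan is to choose a flat metric on each component of $\cZ$ that is geometrically extendable on \emph{both} adjacent pieces, and to take the extensions flat (product) near the boundary. Then Proposition~\ref{prop:atoroidal} and the ``furthermore'' clause of Proposition~\ref{prop:leeb} let the pieces glue along $\cZ$ to a smooth nonpositively curved metric on $\cM$.

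The atoroidal pieces impose no constraint, since by Proposition~\ref{prop:atoroidal} any flat boundary-metric extends. So the constraints come only from Seifert pieces $\cX$. Each such piece has $\chi(\cO)\le 0$, because $\cM$ is irreducible and the JSJ tori are incompressible. A turnover cannot bound a Seifert piece other than a product-type one, and turnovers in $\cZ$ only meet atoroidal pieces up to such degenerate cases. For those, Lemma~\ref{lem:turnover} shows that only the area matters, and we can match areas by scaling.

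Next we organize the Seifert pieces. Let $\Sigma$ be the union of the Seifert pieces, and consider a maximal connected subcomplex $\Sigma_0$ of Seifert pieces glued along components of $\cZ$. Because the JSJ decomposition is minimal, adjacent Seifert pieces have non-matching fibers. I expect the main obstacle to be choosing metrics on the interior tori of $\Sigma_0$ consistently, since Leeb's conditions from Remark~\ref{rmk:leeb} can be circular in a graph. The hypothesis that there is an atoroidal component resolves this: since $\cM$ is connected, each $\Sigma_0$ has at least one boundary component $\mathcal S$ adjacent to an atoroidal piece, where the metric is free.

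We then proceed inductively, following Leeb's tree argument. Order the pieces of $\Sigma_0$ so that each one is attached with one free component still unassigned. Fix a constant fiber length on each piece, and use Corollary~\ref{cor:orthogonal-scaling} with large $C$ so that the lengths of curves transverse to the fibers can be made arbitrarily large. On an interior torus $\cT$ between $\cX_1$ and $\cX_2$, the fiber of $\cX_2$ is transverse to the fiber of $\cX_1$. Therefore prescribing the $\cX_1$-fiber length (as a curve $a_{\cT}$ for $\cX_2$, which is large) and the $\cX_2$-fiber length fixes compatible data on both sides.

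To handle cycles in the graph of $\Sigma_0$, we cut along a maximal tree and use a free atoroidal-adjacent boundary $\mathcal S$. Lemma~\ref{lem:free-boundary} lets the remaining Leeb condition (the relative Euler number) be absorbed on $\mathcal S$ while only fiber lengths are prescribed elsewhere. If a piece is not totally orientable, we pass to $\cX^+$ via Proposition~\ref{prop:totally-orientable-cover}, just as Lemma~\ref{lem:free-boundary} already does.

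Finally, for the remaining flat metrics on boundaries facing atoroidal pieces, we take whatever the Seifert side produced and invoke Proposition~\ref{prop:atoroidal}. If necessary, we scale the whole atoroidal-adjacent data uniformly; each atoroidal piece accepts any metric, so no compatibility issue arises there.
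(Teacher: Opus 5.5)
Your overall architecture matches the paper's: atoroidal pieces accept any flat boundary-metric, Seifert pieces are treated inductively toward the atoroidal pieces using Corollary~\ref{cor:orthogonal-scaling} and Lemma~\ref{lem:free-boundary}, and flat collars allow gluing. The gap is in the inductive step, which is where all the content lies; as written it fails.

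\textbf{Compatibility on a Seifert--Seifert torus.} On a torus $\cT$ between Seifert pieces $\cX_1,\cX_2$, the two fiber lengths do not determine the flat metric, and compatibility is not automatic. The off-diagonal term $B(f_1,f_2)$ appears in the relative Euler condition of Remark~\ref{rmk:leeb} for \emph{both} pieces, and positivity requires $B(f_1,f_2)^2<|f_1|^2|f_2|^2$. If, say, $\cT$ is the only boundary component of a totally orientable $\cX_1$, then $\cX_1$'s Euler condition forces $B(f_1,f_2)$ to be a fixed, typically nonzero, multiple of $|f_1|^2$. So $|f_2|$ must be large compared with $|f_1|$. Hence you cannot ``fix a constant fiber length on each piece'' in advance; the fiber lengths must be chosen one at a time along the ordering.

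\textbf{Cycles.} Your treatment of cycles also does not work. Each Seifert piece has its own Euler condition, so one atoroidal-adjacent component $\mathcal S$ can only absorb the condition of the piece containing it. Moreover, after cutting along a maximal tree you never say what metric goes on the tori of the non-tree edges. This includes tori along which a piece is glued to itself, which you never treat.

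\textbf{How the paper does it.} The paper resolves all of this with one ordering and a one-sided construction of each interface metric.
\begin{enumerate}
\item Order the Seifert pieces by decreasing distance in the dual graph to the atoroidal pieces. Then each $\cX_i$ has a boundary component facing an atoroidal piece or some $\cX_k$ with $k>i$.
\item At step $i$, for every earlier neighbor $\cX_j$, apply Corollary~\ref{cor:orthogonal-scaling} to $\cX_j$ with $a_{\cT}$ the $\cX_i$-fiber, changing only the components of $\cX_j$ facing $\cX_i$. This produces the \emph{whole} flat metric on those tori, keeps $\cX_j$ extendable, and gives all $\cX_i$-fibers a common length $C_i$. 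The constant $C_i$ is chosen large enough for all earlier neighbors at once. On a torus where $\cX_i$ meets itself, take any metric giving both fiber directions length $C_i$.
\item Now $\cX_i$ has constant fiber length on its prescribed components and still has a free one. Lemma~\ref{lem:free-boundary} then absorbs whatever off-diagonal terms arose.
\end{enumerate}
Every Seifert--Seifert torus, whether on a cycle or not, is handled exactly once, at the step of its later endpoint. The requirements ``$C_i$ large relative to earlier data'' are consistent because they follow a total order, so no spanning tree is needed.
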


The basic strategy is to define nonpositively curved metrics on the components of the JSJ decomposition that glue compatibly. Since flat boundary-metrics are always geometrically extendable on atoroidal components (Proposition \ref{prop:atoroidal}), the main concern is Seifert-fibered components. Here we use the flexibility provided by an unprescribed boundary component to inductively construct compatible nonpositively curved metrics on the Seifert fibered components. 

\begin{proof}[Proof of Proposition \ref{prop:nonpositively curved-atoroidal-component}]

    Let $\cZ$ be a JSJ orbifold of $\cM$. By minimality, no component of $\cM \mid \cZ$ is diffeomorphic to $\cT\times[0,1]$, where $\cT$ is either the torus or the pillow. Since $\cM \mid \cZ$ contains an atoroidal component, we may enumerate its Seifert components $\cX_1,\cX_2, \ldots, $ so that each $\cX_i$ has a boundary component adjacent either to an atoroidal component or to some $\cX_k$ with $k > i$ (consider the dual graph to $\cZ$ and order Seifert vertices by decreasing distance to the set of atoroidal vertices, breaking ties arbitrarily). 

We construct compatible nonpositively curved metrics on the Seifert components inductively. Suppose that compatible metrics have been constructed on $\cX_1, \ldots, \cX_{i-1}$. First we modify these metrics so that every boundary metric these prescribe on $\cX_i$ gives its fibers the same length (while maintaining compatibility). For each earlier neighbor $\cX_j$ of $\cX_i$, consider all boundary components of $\cX_j$ glued to $\cX_i$. On each of these components, take the curve corresponding to the fiber of $\cX_i$. By minimality of $\cZ$, these curves are not isotopic to the fiber of $\cX_j$.  By Corollary \ref{cor:orthogonal-scaling} we can adjust the nonpositively curved metric on $\cX_j$ on the boundary components adjacent to $\cX_i$ (leaving the metric on the other boundary components unchanged) so that all the fibers have the same length $C$ for a sufficiently large constant $C$. Specifically, we choose $C$ large enough so that we may apply Corollary \ref{cor:orthogonal-scaling} to each $\cX_j$ incident to $\cX_i$ with $j\le i-1$, and we perform this procedure for each $j$ such that $j\le i-1$ and $\cX_j$ is adjacent to $\cX_i$. Now $\cX_i$ has a partial flat boundary-metric for which the regular fibers of $\cX_i$ all have the same length $C$. In addition, if $\cX_i$ has a pair of boundary components that are adjacent to a single component $\cT$ of $\cZ$, then the fiber directions coming from the two boundary components are linearly independent since $\cZ$ is minimal, and we choose any flat metric on $\cT$ for which the two fiber directions have length $C$ (it is clear that this is possible at the level of inner products). By construction, $\cX_i$ has a boundary component on which no flat metric has yet been prescribed, so Lemma \ref{lem:free-boundary} gives a nonpositively curved metric on $\cX_i$ extending all the prescribed boundary metrics. Repeating this process produces compatible nonpositively curved metrics on all the Seifert fibered components of $\cM \mid \cZ$.

    Next, we construct metrics on the atoroidal components. Choose an arbitrary flat metric on each JSJ component separating two atoroidal components, and use the previously constructed flat metric on each JSJ component separating an atoroidal component from a Seifert component. This prescribes a flat metric on each boundary component of each atoroidal component, and by Proposition \ref{prop:atoroidal} these flat boundary-metrics extend to nonpositively curved metrics on the atoroidal components.

    Finally, by Propositions \ref{prop:atoroidal} and \ref{prop:leeb} the nonpositively curved metrics on the components of $\cM \mid \cZ$ may be chosen to be flat product metrics on collar neighborhoods of their boundaries, so they glue  smoothly to a nonpositively curved metric on $\cM$.
\end{proof}

\subsection{Case: graph manifolds} \label{sec:main-thm-graph}

In this section, we prove Theorem \ref{thm:main} when $M$ is a nonpositively curved graph manifold (with nontrivial JSJ decomposition). To show that $\cM=M/G$ has a nonpositively curved metric, we fix a JSJ orbifold $\cZ$, and show that there exists a flat metric on $\cZ$ that extends geometrically to each component of $\cM \mid \cZ$. This problem is easier if each component is totally orientable (thanks to our homological criterion, Proposition \ref{prop:leeb}), so we begin by considering a cover of $\cM$ with this property.  We say that a graph orbifold is \emph{totally orientable} if each Seifert component of its JSJ decomposition is totally orientable.

\begin{prop} \label{prop:orientation-cover}
Every graph orbifold $\cM$ has a totally-orientable cover $\cM^+\to\cM$ of degree $\le2$. 
\end{prop}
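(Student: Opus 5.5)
The plan is to build a homomorphism $\omega:\pi_1(\cM)\to\Z/2\Z$ whose restriction to each Seifert piece is the ``base orientation'' character, and to take $\cM^+$ to be the cover associated to $\ker\omega$; this cover has degree $\le 2$.

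For a Seifert piece $\cX\to\cO$, let $\omega_\cX:\pi_1(\cX)\to\Z/2\Z$ record whether an element reverses the orientation of the fiber. Because $\cX$ is orientable, this is the same as asking whether its image in $\pi_1(\cO)$ reverses the orientation of the base. By \S\ref{sec:TO-cover}, $\ker\omega_\cX$ is $\pi_1(\cX^+)$. The point is that any cover of $\cX$ on which $\omega_\cX$ vanishes has orientable base, so it is totally orientable. It is therefore enough to find a global character $\omega$ on $\pi_1(\cM)$ with $\omega|_{\pi_1(\cX)}=\omega_\cX$ for every Seifert piece $\cX$ of $\cM\mid\cZ$ (up to conjugation).

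To construct $\omega$ I would use the graph-of-groups decomposition of $\pi_1(\cM)$ along the JSJ orbifold $\cZ$. There is a vertex group for each component of $\cM\mid\cZ$ and an edge group $\pi_1(\cT)$ for each component $\cT$ of $\cZ$, where $\cT$ is a torus or pillow; turnovers are impossible, since then $\cM$ would not be a graph orbifold with these pieces. A character on $\pi_1(\cM)$ is the same as a compatible family of vertex characters together with arbitrary values on the stable letters of a maximal-tree complement. Since $\Z/2\Z$ is abelian, the stable letters impose no constraints, and I can set them to $0$. The only thing to check is compatibility on edges: if $\cT$ is glued between pieces $\cX$ and $\cX'$, then $\omega_\cX|_{\pi_1(\cT)}$ must equal $\omega_{\cX'}|_{\pi_1(\cT)}$.

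This edge compatibility is the main obstacle. The key is a description of $\omega_\cX$ restricted to a boundary component $\cT$ that depends only on $\cT$ and not on the fibration. If $\cT$ is a torus, then $\omega_\cX|_{\pi_1(\cT)}$ is trivial: the boundary curve in $\cO$ is a circle with orientable neighborhood, so it preserves fiber orientation. If $\cT$ is a pillow, then the boundary of $\cO$ there is a mirrored interval $\bar I$. In that case $\omega_\cX|_{\pi_1(\cT)}$ is the character of $\pi_1(\cT)\to\Z/2\Z$ whose kernel is $\Z^2$ (the maximal abelian subgroup), that is, the orientation-type character sending each order-2 rotation to $1$. To see this, note that the cone points of the pillow lie over corner reflectors and the fiber is reversed there, while the translation subgroup $\Z^2$ preserves the fiber, consistent with \S\ref{sec:TO-cover}, where the pillow is covered by its torus double cover. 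In both cases the restriction is intrinsic: it is trivial for a torus, and for a pillow it is the unique character with kernel the maximal abelian subgroup. Hence both sides of every edge agree, including when $\cX=\cX'$ is glued to itself. Atoroidal pieces do not occur in a graph orbifold, so no further vertices need to be handled.

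The resulting $\omega$ may be trivial, which happens exactly when all pieces are already totally orientable; then $\cM^+=\cM$. Otherwise $\cM^+$ is the connected double cover corresponding to $\ker\omega$. Each Seifert piece of $\cM^+$ covers some piece $\cX$ of $\cM$ with fundamental group inside $\ker\omega_\cX$, so it has orientable base. Preimages of the JSJ orbifold give a decomposition of $\cM^+$ into these pieces, possibly non-minimal. Any JSJ component of $\cM^+$ is a union of such pieces, and its base is orientable because it is a union of orientable pieces, so every Seifert component of the JSJ decomposition of $\cM^+$ is totally orientable.
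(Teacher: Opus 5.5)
Your proposal is correct and is essentially the paper's argument in group-theoretic form. The paper glues the trivial double cover $\cX\sqcup\cX$ over totally orientable pieces and $\cX^+$ over the others, and its check that these agree over each torus (trivial cover) and each pillow (hyperelliptic cover) is exactly your edge-compatibility computation for $\omega$; your free choice of values on the stable letters plays the role of its choice of gluing isomorphisms. The one weak spot is your last sentence, which goes beyond what the paper needs, since the paper only uses that the components of the preimage of $\cZ$ are totally orientable: a union of pieces with orientable bases need not have orientable base (identifying two boundary circles of an orientable surface can produce a nonorientable one), so passing to the honest JSJ decomposition of $\cM^+$ would need a separate argument that no Seifert pieces merge in the cover and only redundant product pieces appear.
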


\begin{proof}
If $\cM$ is totally orientable, take $\cM^+=\cM$. Otherwise, fixing a JSJ decomposition $\cZ$ of $\cM$, we first specify the cover over each component of $\cM\mid\cZ$: for a totally-orientable component $\cX$ we take the disconnected double cover $\cX\sqcup\cX\to\cX$, and for a non-totally-orientable component $\cX$, we take its totally-orientable double cover $\cX^+\to\cX$. What remains is to explain why we can glue these appropriately to obtain a cover $\cM^+$ of $\cM$. For this we must check that if $\cX,\cX'$ are two components that meet along a component $\cT$ of $\cZ$, then the covers over $\cX$ and $\cX'$ agree over $\cT$. 

If $\cX$ is totally orientable, then each boundary component $\cT$ is a torus and the cover over this torus is the disconnected cover $\cT\sqcup\cT\to\cT$. If $\cX$ is not totally orientable, then its boundary components consist of tori and pillows. Over a torus, the cover is disconnected $\cT\sqcup\cT\to\cT$, and over a pillow, the cover is the hyperelliptic cover $T^2\to S^2(2,2,2,2)$ (cf.\ \S \ref{sec:TO-cover}). Thus, when two components of $\cM\mid\cZ$ meet along $\cT$, the two covers agree over $\cT$. We glue by an isomorphism of these covers to construct $\cM^+$. 
\end{proof}

\begin{rmk}
When $\cM$ is not totally orientable, the cover $p:\cM^+\to\cM$ constructed in the proof of Proposition \ref{prop:orientation-cover} has the additional property that over each totally orientable component of the JSJ decomposition of $\cM$, the cover is disconnected. 
\end{rmk}

\begin{rmk}
Since a double cover has two self-isomorphisms (as a covering space), the construction of $\cM^+$ is not unique.  
\end{rmk}

Set $\cM^+$ to be a totally-orientable double cover of $\cM$ if $\cM$ is not totally orientable, and set $\cM^+ = \cM$ otherwise. Define $N$ to be a connected component of the pullback $N'$ in the following diagram. 
\[\begin{tikzcd}
N' & {\cM^+} \\
M & {\mathcal{M}}
\arrow["{\text{}}", from=1-1, to=1-2]
\arrow[from=1-1, to=2-1]
\arrow[from=1-2, to=2-2]
\arrow[from=2-1, to=2-2]
\end{tikzcd}\]
Then $N\to M$ and $N\to\cM^+$ are covering maps, and the covering $N\to\cM$ is regular ($\pi_1(M)$ and $\pi_1(\cM^+)$ are both normal in $\pi_1(\cM)$, so their intersection, which is $\pi_1(N)$ is also normal). Write $H$ for the deck group of the cover $N\to\cM$.

The following Proposition \ref{prop:average-JSJ} constructs the desired flat metric on $\cZ$, using the regular manifold cover $N\to \cM$. 

\begin{prop}\label{prop:average-JSJ}
Fix $N$ and $\cM=N/H$ as above. Let $\mathcal Z\subset\mathcal M$ be a JSJ orbifold, and let $Z\subset N$ be the preimage of $\mathcal Z$ under the quotient map $N\to\mathcal M$. If $N$ has a nonpositively curved metric, then there is a flat metric $\mu_Z$ on $Z$ that is $H$-invariant and extends to a nonpositively curved metric on $N$. 
\end{prop}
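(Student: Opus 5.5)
Start from a nonpositively curved metric on $N$ and produce a good flat metric on $Z$. By Leeb's theory for graph manifolds (cf.\ \cite{leeb, byron}), $N$ then has a nonpositively curved metric in which each component of $Z$ is totally geodesic and flat. Call this flat metric $\nu$ on $Z$. It extends geometrically to every component of $N\mid Z$. Here $Z$ may be non-minimal: some components of $N\mid Z$ may be $T^2\times[0,1]$, but then the extendability condition from Proposition \ref{prop:leeb} is simply $B_0=B_1$. Since $N\to\cM^+$ is a cover and $\cM^+$ is totally orientable, every Seifert component $Y$ of $N\mid Z$ is a totally orientable Seifert manifold (pulled back from a totally orientable component of $\cM^+\mid\cZ^+$). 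Its fibration is also $H$-equivariant, because $H$ permutes the components of $N\mid Z$ and preserves fiber classes up to sign. Hence the homological criterion of Proposition \ref{prop:leeb} applies to each component. By Remark \ref{rmk:leeb}, extendability is then expressed by Leeb's linear conditions (1) and (2) on the inner products $B$ over all components of $Z$, together with $B_0=B_1$ for product components.

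The plan is to average over $H$, following the proof of Lemma \ref{lem:free-boundary}. First choose an auxiliary $H$-invariant flat metric $\nu^H$ on $Z$. This exists: $\cZ$ carries flat metrics, and we can pull one back along $Z\to\cZ$. By Lemma \ref{lem:torus-pillow}, applied componentwise on $Z$ (a union of tori), there is a unique flat metric $\nu_0$ on $Z$ that is isotopic to $\nu$ and affinely equivalent to $\nu^H$. Now set
\[\mu_Z=\frac{1}{|H|}\sum_{h\in H}h^*\nu_0.\]
The Levi-Civita connection of $\nu^H$ is $H$-invariant, so each $h^*\nu_0$ is affinely equivalent to $h^*\nu^H=\nu^H$. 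Thus all summands are affinely equivalent, and by Remark \ref{rmk:average} the average $\mu_Z$ is flat. It is $H$-invariant by construction.

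It remains to check that $\mu_Z$ extends geometrically on every component of $N\mid Z$. The metric $\nu_0$ is isotopic to $\nu$, so it has the same inner products $B$ and therefore satisfies Leeb's conditions on all components. For $h\in H$, $h^*\nu_0$ satisfies Leeb's conditions for the pulled-back decomposition. Since $h$ maps $N\mid Z$ to itself, it carries each Seifert component to a Seifert component, fibers to fibers up to sign, and preserves the relative Euler numbers with compatible framings. So $h^*\nu_0$ also satisfies conditions (1) and (2) and the product condition. Inner products add linearly under averaging of affinely equivalent flat metrics: $B_{\mu_Z}$ is the average of the $B_{h^*\nu_0}$, because the geodesics are common to all summands. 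Therefore Remark \ref{rmk:convex-combination} (and linearity of $B_0=B_1$) shows that $\mu_Z$ satisfies all the conditions. Proposition \ref{prop:leeb} then gives extensions on each Seifert component, flat near the boundary, and these glue to a nonpositively curved metric on $N$.

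The main obstacle is the equivariance check: that $h^*$ preserves Leeb's conditions, in particular the sign conventions for $f$ and the Euler-number condition (2). I would verify this through the cohomological form of the criterion, which is manifestly natural: $h$ induces isomorphisms $H^1(Y)\to H^1(hY)$ compatible with restriction to the boundary, and $\beta_{h^*\nu}=\pm h^*\beta_\nu$. A sign flip of $f$ preserves membership in the image of restriction, since that image is a linear subspace. A secondary point is justifying the initial flat totally geodesic $Z$; if the torus-splitting result is unavailable, I would instead use Leeb's criterion for graph manifolds, where nonpositive curvature of $N$ is equivalent to the existence of such a boundary metric.
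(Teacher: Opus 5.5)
Your proposal is correct and follows essentially the same route as the paper. You start from an extendable flat metric on $Z$, use Lemma~\ref{lem:torus-pillow} to replace it by an isotopic metric affinely equivalent to an $H$-invariant one, average over $H$, and check that the homological criterion (with $B_0=B_1$ on product pieces) survives isotopy, pullback by each $h\in H$, and convex combination. The only difference is how the initial metric is obtained: the paper gets it from Leeb--Scott on a JSJ family $W$ of $N$ and transports it to the possibly non-minimal $Z$ by isotopy, whereas you assert directly a metric with $Z$ flat and totally geodesic. Your naturality argument for $h^*$ and the linearity of $B$ on an affine class spell out steps the paper leaves implicit.
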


\begin{proof}[Proof of Proposition \ref{prop:average-JSJ}] 
Our recipe for constructing $\mu_Z$ is as follows. Using a result of Leeb--Scott \cite{leeb-scott} we may find a flat metric on a JSJ orbifold $W$ of $N$ that extends to a nonpositively curved metric $\nu_W$ on $N$. Since the JSJ decomposition is unique, every component of $Z$ is isotopic to a unique component of $W$, and $\nu_W$ induces a flat metric $\nu_Z^{npc}$ (unique up to isotopy) on $Z$. Then by the homological criterion (Proposition \ref{prop:leeb}), the metric $\nu_Z^{npc}$  extends to a nonpositively curved metric on $N$. Separately, we may find an $H$-invariant flat metric $\nu_Z^{H}$ on $Z$ (by pulling back a flat metric from $Z/H$). The metric $\nu_Z^H$ has no reason to extend to a nonpositively curved metric on $N$. Nevertheless we use $\nu_Z^{H}$ to ``tune" $\nu_Z^{npc}$ by replacing $\nu_Z^{npc}$ with the unique flat metric $\mu_Z^0$ that's isotopic to $\nu_Z^{npc}$ and affinely equivalent to $\nu_Z^{H}$ on each component of $Z$ (Lemma \ref{lem:torus-pillow}), and we average $\mu_Z^0$ to obtain $\mu_Z$: 
\[
\mu_Z:=\frac{1}{|H|}\sum_{h\in H}h^*(\mu_Z^0).
\]
The metric $\mu_Z$ is flat because the metrics of $\mu_Z^0$ on components in the same $H$-orbit are affinely equivalent, and the average of affinely-equivalent flat metrics is flat (Remark \ref{rmk:average}). 

We show that $\mu_Z$ extends to a nonpositively curved metric on $N$ by applying the homological criterion to each component of $N\mid Z$. First note that $\mu_Z^0$ satisfies the homological criterion on each component because $\mu_Z^0$ is isotopic to $\nu_Z^{npc}$; the latter satisfies the homological criterion by assumption, and the homological criterion is clearly preserved by isotopy. Then $h^*(\mu_Z^0)$ also satisfies the homological criterion on each component because $h$ is a diffeomorphism. This implies that the convex combination $\mu_Z$ also satisfies the homological criterion on each component; see Remark \ref{rmk:convex-combination}.

When $X=T^2\times[0,1]$, in order to apply Proposition \ref{prop:leeb}, we need  the stronger condition that the metrics on the two boundary components are given by the same inner product $B$. This holds for $\nu^{npc}_Z$, so it also holds for the isotopic metric $\mu_Z^0$. Furthermore, it clearly holds for $h^*(\mu_Z^0)$ for each $h\in H$, so it also holds for the average $\mu_Z$. 
\end{proof}

To complete the proof of Theorem \ref{thm:main}, let $\mu_Z$ be an $H$-invariant flat metric on $Z$ that extends to a nonpositively curved metric on $N$ (from Proposition \ref{prop:average-JSJ}). The metric $\mu_Z$ descends to flat metrics $\mu_{\cZ^+},\mu_{\cZ}$ on $\cZ^+,\cZ$, respectively (where $\cZ^+$ is the preimage of $\cZ$ in $\cM^+$). We show that $\mu_{\cZ}$ extends geometrically to each component of $\cM \mid \cZ$. Fix a component $\cX$, and write $\cX^+,X$ for a component of the preimage of $\cX$ in $\cM^+,N$, respectively. Since $\mu_Z$ extends geometrically to $X$, Proposition \ref{prop:cover-totally-orientable} implies that $\mu_{\cZ^+}$ extends geometrically to $\cX^+$. Then Proposition \ref{prop:totally-orientable-cover} implies $\mu_{\cZ}$ extends geometrically to $\cX$. Since these extensions may be chosen to be flat on collar neighborhoods of $\partial\cX$, they glue together smoothly across all the Seifert components of $\cM \mid \cZ$ to produce a nonpositively curved metric on $\cM$.

\bibliographystyle{amsalpha}
\bibliography{references}

\end{document}